\newcommand\D{{\not\! d}}
\newcommand\E{{\not\! \bf e}}
\newcommand\ee{{\mathbf e}}
\newcommand\mm{{\mathbf m}}
\newcommand\nn{{\mathbf n}}
\newcommand\kk{{\mathbf k}}
\newcommand\pp{{\mathbf p}}
\newcommand\rr{{\mathbf r}}
\newcommand\CC{\mathbb C}
\newcommand\RR{\mathbb R}
\newcommand\ZZ{\mathbb Z}
\newcommand\NN{\mathbb N}
\newcommand\A{{\mathcal A}}
\newcommand\GL{{\mathrm{GL}}}
\newcommand\SU{{\mathrm{SU}}}
\newcommand\U{{\mathrm{U}}}
\newcommand\End{\operatorname{End}}
\newcommand\matc[4]{\left( {#1\@@atop #3}{#2\@@atop #4}\right)}
\newcommand\matr[4]{\left( {\hfill #1\@@atop\hfill #3}{\hfill
#2\@@atop\hfill #4}\right)}
\newcommand\matl[4]{\left( { #1\@@atop #3}{ #2\@@atop\hfill #4}\right)}
\newcommand\widearray[1]{\renewcommand\arraystretch{1.4} \begin{array}{#1}}
\theoremstyle{plain}
\newtheorem{thm}{Theorem}[section]
\newtheorem{prop}[thm]{Proposition}
\theoremstyle{definition}
\theoremstyle{remark}
\newtheorem*{remark}{Remark}
\title[Two stochastic models of a random walk]{Two stochastic models of a
random walk in the $\U(n)$-spherical duals of $\U(n+1)$}
\author{F. A. Gr\"unbaum}
\author{I. Pacharoni}
\author{J. Tirao}
\address{Department of Math. University of California,
Berkeley, California 94705} \email{grunbaum@math.berkeley.edu}
\address{CIEM-FaMAF, Universidad Nacional de C\'ordoba, 5000 C\'ordoba, Argentina}
\email{pacharon@mate.uncor.edu, tirao@mate.uncor.edu}
\date{\today}
\begin{document}

\begin{abstract}
The random walk to be considered takes place in the
$\delta$-spherical dual of the group $\U(n+1)$, for a fixed finite
dimensional irreducible representation $\delta$ of  $\U(n)$. The
transition matrix comes from the three term recursion relation
satisfied by a sequence of matrix valued orthogonal polynomials
built up from the irreducible spherical functions of type $\delta$
of $\SU(n+1)$. One of the stochastic models is an urn model and the
other is a Young diagram model.
\end{abstract}
\maketitle

\section{Introduction}

Around 1770 D. Bernoulli studied a model for the exchange of heat between two bodies.
This model can also be seen as a description of the diffusion of a pair of
incompressible gases between two containers. This model was independently analyzed
by S. Laplace around 1810, see the references in \cite{F}. Another model of similar
characteristics was introduced by P. and T. Ehrenfest in 1907 in connection with the
controversies surrounding the work of L. Boltzmann in the kinetic theory of gases
dealing with
reversibility and convergence to equilibrium. Boltzmann had apparentlly
deduced his H-theorem dictating convergence
to equilibrium starting from the time reversible equations of Newton. For a nice
account of this see \cite{K}. Both of these models are instances of discrete time Markov
chains with fairly explicit tridiagonal one-step transition probability matrices which are
obtained by considering carefully the underlying stochastic mechanism that connects
the state of the system at two consecutive values of time.

The second model features two urns, I and II, that share a total of N balls. The state
of the system at time $n$ is the number of balls in urn I. Each ball has a different label from
the set $1,2,...,N$. At time $n$ a number $j$ in the set $1,2,...,N$ is chosen with
equal probabilities and the ball with this label is moved from the urn where it sits
to the other urn. This gives the state of the system at time $n+1$. Writing down the
one-step transition probability matrix is now a matter of counting carefully.

While it had been possible to obtain interesting answers for these
two models for quite some time, it is only much more recently that
some very nice connections have been noticed between these models
and some basic sets of discrete orthogonal polynomials, namely the
Krawtchouk and the dual Hahn polynomials. Moreover although there
are many ways of arriving at these polynomials it is relevant to
mention here that they can be realized as the ''spherical functions"
for certain finite bihomogeneous spaces. A very good reference for
this material is \cite{S}. We stress the remarkable fact that these
two models of old vintage and clear physical significance can be solved
in terms of the simplest of all hypergeometric functions, namely
$_2F_1$ and $_3F_2$.

As many readers certainly know many of the classical special
functions of mathematical physics, such as the Legendre, the Hermite
and the Laguerre polynomials, could have been obtained for the first
time as spherical functions for certain symmetric spaces. A good
basic reference here is \cite{V}. The way that things developed
historically is, of course, completely different.

The interplay between important physical problems and certain tools
that arise naturally in group representation theory constitutes the
theme of this paper. The situation described here is the reverse of
what has been discussed above for the Bernoulli-Laplace and the
Ehrenfest models: we will go from group representation theory to
some concrete models that might be of some physical interest. We
will start from a matrix that is obtained from group representation
theory and try to build a model that goes along with it. The models
constructed here are certainly not the only possible ones. More
natural ones might be lurking around.

In a series of papers including \cite{T1,T2, GPT, GPT1, GPT2, GPT3,
P, PT1, PT2, PT3, PT4} one considers matrix valued spherical
functions associated to a pair $(G,K)$ arriving at sequences of
matrix valued polynomials of one real variable satisfying a three
term recursion relation whose semi-infinite block tridiagonal matrix
is stochastic, i.e. the entries are non-negative and the sum of the
elements in any row is $1$. This matrix depends on a number of free
parameters that have a very definite group theoretical meaning. The
important point is that the tools developed in the papers just
mentioned allow one to give explicit expressions, in terms of some
definite integrals, of all the entries of any power of the original
matrix. This means that if one could think of a nice Markov chain
with this matrix as its one-step transition probability matrix one
would have an explicit form for the entries of the $n$-step
transition probability matrix. Many readers will recognize that this
is exactly what S. Karlin and J. McGregor, see \cite{KMcG}, proposed
as a way of exploiting orthogonal polynomials and the role they play
in the spectral analysis of certain finite or semi-infinite
tridiagonal matrices. The method advocated in \cite{KMcG} starts
with a so called birth-and-death process whose one-step tridiagonal
transition matrix is easily constructed from the given model and one
has to look for the corresponding spectral information: the
eigenfunctions and the spectral measure. Here we travel this road in
the opposite direction in a more elaborate set-up.

\bigskip

The relation between matrix valued orthogonal polynomials, block
tridiagonal matrices and Quasi-Birth and Death processes has been
first exploited independently in \cite{DRSZ,G} as well as in later
papers by these authors.

\smallskip

We will consider several random walks whose configuration spaces are
subsets of $\hat\U(n+1)(\kk)$, the so call $\kk$-spherical dual of $\U(n+1)$, and
whose one-step transition matrices come from the stochastic matrix
that appears in   \cite{PT2} and \cite{P}, see also \cite{PT4}. The
dual of $\U(n+1)$ is the set $\hat\U(n+1)$ of all equivalence classes of finite
dimensional irreducible representations of $\U(n+1)$. These equivalence
classes are parametrized by the $n+1$-tuples of integers
$\mm=(m_1,\dots,m_{n+1})$ subject to the conditions $m_1\ge
\cdots\ge m_{n+1}$.

If $\kk=(k_1,\cdots,k_n)\in\hat \U(n)$, the $\kk$-spherical
dual of $\U(n+1)$ is the subset  $\hat\U(n+1)(\kk)$ of $\hat\U(n+1)$ of the
representations of $\U(n+1)$ whose restriction to $\U(n)$ contains the
representation $\kk$. Then it is well known, see \cite{V}, that
$\hat\U(n+1)(\kk)$ corresponds to the set of all $\mm$'s as above that satisfy the
extra constraints
\begin{equation}\label{constrains}
m_i\ge k_i\ge m_{i+1},\quad \text{ for all}\quad i=1,\dots,n.
\end{equation}

In other words $\hat\U(n+1)(\kk)$ can be visualized as the subset of
all points $\mm$ of the integral lattice $\ZZ^{n+1}$ in the set
$$[k_1,\infty)\times[k_2,k_1]\times\cdots\times[k_{n-1},k_n]\times(-\infty,k_n].$$
An example is given in the figure below.

\begin{figure}[h]
\centering
\includegraphics[width=0.4\textwidth]{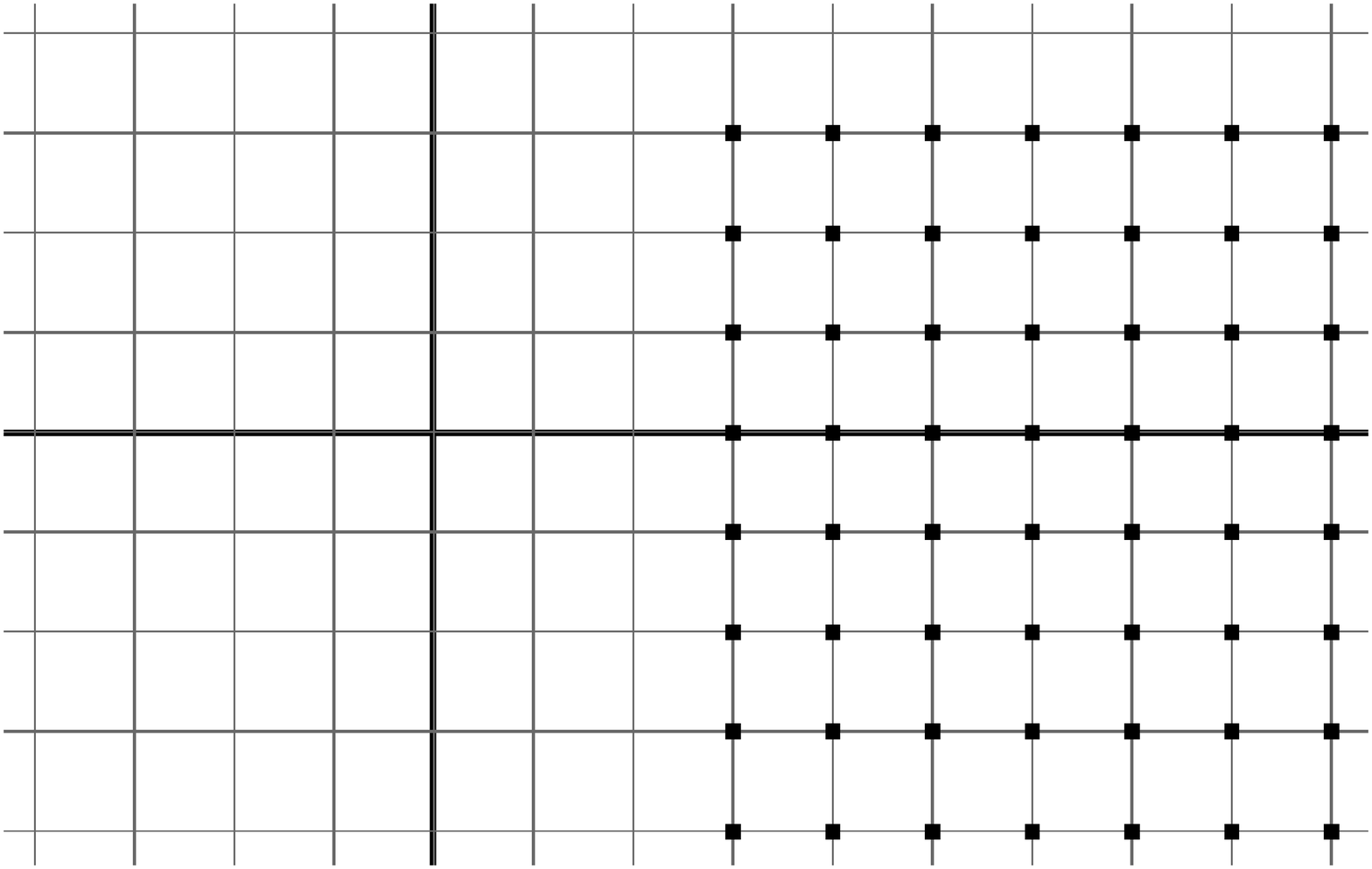}
\caption{$\hat\U(n+1)(\kk)$,\;$n=1$,\;$k_1=3$.}
\end{figure}

\

We can now state more precisely the point of this paper: starting
from the stochastic matrix $M$ that appears in \cite{PT2} and
\cite{P}, we describe a random mechanism that gives rise to a Markov
chain whose state space is the subset of $\hat\U(n+1)(\kk)$ of all
$\mm\in\hat\U(n+1)(\kk)$ such that $s_{\mm}=s_{\kk}$ and  $k_n\ge0$
($s_{\mm}=m_1+\cdots+m_{n+1}, s_{\kk}=k_1+\cdots+k_n$), and whose
one-step transition matrix coincides with the one we started from.
The construction in \cite{GPT} and \cite{PT2} deals with the case of
$(\SU(3),\U(2))$ but in \cite{PT3} and \cite{P} this was extended to
the case of $(\SU(n+1),\U(n))$.

One step of the Markov evolution will consist of two substeps taken
in succesion. In the first substep one of the values of $m_i$
increases by one, subject to the constraints (1). In the second
substep one of the new values of our $m_i$'s decreases by one, again
this is subject to the same constraints. Thus from the configuration
$\mm$ one could for instance go to $\mm-\ee_i+\ee_j$ or one could
stay put at $\mm$.  We use the notation $\ee_i$ for the vector with
its $i$th component equal to $1$ and all the others equal to $0$.
Any state has a total of at most $n(n+1)+1$ positions where it can
move in one complete step of our process consisting of two simpler
steps. Keep in mind that the two succesive simpler steps can end up
with our random walker in the initial state. We will analyze in
detail the simpler substeps that constitute one full step of our
process. This will take up most of the analysis in the next
sections.

We now describe the contents of the paper.

In Section \ref{esfericas} we collect the necessary material to
state and explain a three term recursion relation (with matrix coefficients) for a sequence of
matrix valued orthogonal polynomials, built up from irreducible
spherical functions of a fixed type associated to the pair
$(\SU(n+1), \U(n))$.
This should help the reader make the connection
between \cite{PT2, P} and the present paper.

In Section \ref{factorizacion} we construct a factorization of the
stochastic matrix that define the three term recursion relation for
the sequence of matrix valued orthogonal polynomials given in the
previous section. This factorization into two stochastic matrices leads to the two substeps
mentioned above.

Before starting the analysis of our general urn model in Section
\ref{urnmodelUn} for one of the substeps, we describe in detail in  Section
\ref{urnmodelU3}  an urn model for
$n=2$.

The definition of the stochastic matrix $M$ alluded above, as well
as its factorization make sense for any $\mm\in\hat\U(n+1)(\kk)$.

To each  configuration $m_1\ge m_2\ge \cdots\ge m_n\ge0$ of $n$
integer numbers we associate its Young diagram, a combinatorial
object which has $m_1$ boxes in the first row, $m_2$ boxes in the
second row, and so on down to the last row which has $m_n$ boxes.
For example the Young diagram associated to the configuration
$6\ge4\ge4\ge3$ is

\begin{figure}[h]
\centering
$$\begin{array} {l}
\begin{array}{|c|c|c|c|c|c|}
\hline \makebox[1.5mm]{}&\makebox[1.5mm]{} &\makebox[1.5mm]{}&\makebox[1.5mm]{}&\makebox[1.5mm]{}&\makebox[1.5mm]{}\\ \hline \end{array}\\
\begin{array}{|c|c|c|c|} \makebox[1.5mm]{}& \makebox[1.5mm]{} & \makebox[1.5mm]{}&\makebox[1.5mm]{}\\ \hline \end{array}\\
\begin{array}{|c|c|c|c|} \makebox[1.5mm]{}&\makebox[1.5mm]{}&\makebox[1.5mm]{}&\makebox[1.5mm]{}\\ \hline \end{array}\\
\begin{array}{|c|c|c|} \makebox[1.5mm]{}&\makebox[1.5mm]{}&\makebox[1.5mm]{}\\ \hline \end{array}\\
\end{array}$$
\caption{}
\end{figure}

Young diagrams and their relatives the Young tableaux are very
useful in representation theory. They provide a convenient way to
describe the group representations of the symmetric and general
linear groups and to study their properties. In particular Young
diagrams are in one-to-one correspondence with the irreducible
representations of the symmetric group over the complex numbers and
the irreducible polynomial representations of the general linear
groups. They were introduced by Alfred Young in 1900. They were then
applied to the study of the symmetric group by Georg Frobenius in
1903. Their theory and applications were further developed  by many
mathematicians and there are  numerous and interesting applications,
beyond representation theory, in combinatorics and algebraic
geometry.

If we consider the subset all $\mm\in\hat\U(n+1)(\kk)$ such that
$m_{n+1}\ge0$ it is natural to represent such a state of our Markov
chain by its Young diagram, see Section \ref{Young}. Then in the
last two sections we describe a random mechanism based on Young
diagrams that gives rise to a random walk in the set of all Young
diagrams of $2n+1$ rows and whose $2j$ row  has $k_j$ boxes $1\le
j\le n$,  and whose transition matrix is $\tilde M_1$, see
\eqref{M1}.

\section{Spherical functions of $(\SU(n+1),\U(n))$}\label{esfericas}

Let $G$ be a locally compact unimodular group and let $K$ be a compact
subgroup of $G$. Let $\hat K$ denote the set of all
equivalence classes of complex finite dimensional irreducible
representations of $K$; for each $\delta\in \hat K$, let
$\xi_\delta$ denote the character of $\delta$, $d(\delta)$ the degree of
$\delta$, i.e. the dimension of any representation in
the  $\delta$, and $\chi_\delta=d(\delta)\xi_\delta$. We choose the Haar measure $dk$ on
$K$ normalized by $\int_K dk=1$.
We shall denote by $V$ a finite dimensional vector space over the field
$\CC$ of complex numbers and by $\End(V)$ the space
of all linear transformations of $V$ into $V$.

A {\em spherical function} $\Phi$ on $G$ of type $\delta\in \hat K$ is a
continuous function on $G$ with values in $\End(V)$ such
that
\begin{enumerate} \item[i)] $\Phi(e)=I$. ($I$= identity transformation).

\item[ii)] $\Phi(x)\Phi(y)=\int_K \chi_{\delta}(k^{-1})\Phi(xky)\, dk$, for
all $x,y\in G$.
\end{enumerate}

  If $\Phi:G\longrightarrow
\End(V)$ is a spherical function of type $\delta$
then $\Phi(kgk')=\Phi(k)\Phi(g)\Phi(k')$, for all $k,k'\in K$, $g\in G$,
and  $k\mapsto \Phi(k)$ is a representation of $K$ such that any
irreducible subrepresentation belongs to $\delta$.

Spherical functions of type $\delta$ arise in a natural way upon
considering representations of $G$. If $g\mapsto U(g)$ is a
continuous representation of $G$, say on a finite dimensional vector
space $E$, then $$P(\delta)=\int_K \chi_\delta(k^{-1})U(k)\, dk$$ is
a projection of $E$ onto $P(\delta)E=E(\delta)$. The function
$\Phi:G\longrightarrow \End(E(\delta))$ defined by
$$\Phi(g)a=P(\delta)U(g)a,\quad g\in G,\; a\in E(\delta)$$
is a spherical function of type $\delta$. In fact, if $a\in E(\delta)$ we have
\begin{align*}
\Phi(x)\Phi(y)a&= P(\delta)U(x)P(\delta)U(y)a=\int_K \chi_\delta(k^{-1})
P(\delta)U(x)U(k)U(y)a\, dk\\
&=\left(\int_K\chi_\delta(k^{-1})\Phi(xky)\, dk\right) a. \end{align*}
If the representation $g\mapsto U(g)$ is irreducible then the associated
spherical function $\Phi$ is also irreducible. Conversely, any irreducible
spherical function on a compact group $G$ arises in this way from a finite dimensional irreducible representation of $G$.

The aim of this section is to collect the necessary material to state and explain a three
term recursion relation for a sequence of matrix valued orthogonal polynomials, built up from
irreducible spherical functions of the same type associated to the pair
$(\SU(n+1), {\mathrm S}(\U(n)\times \U(1)))$.

The irreducible finite dimensional representations of $\SU(n+1) $ are restriction of  irreducible
representations of $\U(n+1)$, which are parameterized by $(n+1)$-tuples of integers
$$\mm=(m_{1}, m_{2},\dots, m_{n+1})$$
such that $m_{1}\geq m_{2}\geq \cdots \geq m_{n+1}$.

Different representations of $\U(n+1)$ can  restrict to the same
representation of $G=\SU(n+1)$. In fact the representations $\mm$
and $\pp$ of $\U(n+1)$ restrict to the same representation of
$\SU(n+1)$ if and only if $m_i=p_i+j$ for all $i=1,\dots,n+1$ and
some $j\in\ZZ$.

The closed subgroup $K={\mathrm S}(\U(n)\times \U(1))$ of $G$ is
isomorphic to $\U(n)$, hence its finite dimensional irreducible
representations are parameterized by the $n$-tuples of integers
$$\kk=(k_{1}, k_{2},\dots,k_{n})$$ subject to the conditions $k_{1}\geq k_{2}\geq \cdots \geq k_{n}$.

Let $\kk$ be an irreducible finite dimensional representation of
$\U(n)$. Then $\kk$ is a subrepresentation of $\mm$ if and only if
the coefficients $k_{i}$ satisfy the interlacing property
$$m_{i}\geq k_{i}\geq m_{i+1},\quad  \text { for all
}\quad  i=1, \dots , n.$$ Moreover if $\kk$ is a subrepresentation
of $\mm$ it appears only once. (See \cite{VK}).

The representation space $V_\kk$ of $\kk$ is a subspace of the
representation space $V_\mm$ of $\mm$ and it is also $K$-stable. In
fact, if $A\in\U(n)$, $a=(\det A)^{-1}$ and  $v\in V_\kk$ we have
$$\left(\begin{matrix} A&0\\0&a\end{matrix}\right)\cdot v=
a\left(\begin{matrix} a^{-1}A&0\\0&1\end{matrix}\right)\cdot
v=a^{s_\mm-s_\kk}\left(\begin{matrix}A&0\\0&1\end{matrix}\right)\cdot
v,$$ where $s_\mm= m_{1}+\cdots +m_{n+1}$ and $s_\kk=k_{1}+\cdots
+k_{n}$. This means that the representation of $K$ on $V_\kk$
obtained from $\mm$ by restriction is parameterized by
\begin{equation}\label{Ktipos}
(k_{1}+s_\kk-s_\mm, \dots ,k_{n} + s_\kk-
s_\mm).
\end{equation}

\

Let $\Phi^{\mm, \kk}$ be  the spherical function associated to the
representation $\mm$ of $G$ and to the subrepresentation $\kk$ of
$K$. Then \eqref{Ktipos} says that the $K$-type of $\Phi^{\mm, \kk}$ is
$\kk+(s_\kk-s_\mm)(1,\dots,1)$.

\begin{prop}\label{equivalencia}
  The spherical functions $\Phi^{\mm,\kk}$ and
  $\Phi^{\mm',\kk'}$ of the pair $(G,K)$ are equivalent if and only
  if $\mm'=\mm+j(1,\dots, 1)$ and $\kk'=\kk+j(1,\dots, 1)$.
\end{prop}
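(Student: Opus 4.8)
The plan is to prove the equivalence criterion by reducing everything to the already-established facts about restrictions of representations, together with the uniqueness of the subrepresentation $\kk$ in $\mm$. First I would recall that $\Phi^{\mm,\kk}$ is determined (up to equivalence) by a pair consisting of an irreducible representation $U$ of $G=\SU(n+1)$ and an irreducible $K$-subrepresentation occurring in $U|_K$, via the construction $\Phi(g)a=P(\delta)U(g)a$ described earlier in this section. Two such spherical functions are equivalent precisely when the underlying data agree: the representations of $G$ are equivalent and the $K$-types $\delta$ coincide. So the statement becomes a bookkeeping question about which $\U(n+1)$-parameters $\mm$ give the same $\SU(n+1)$-representation, and which $\U(n)$-parameters $\kk$ give the same $K$-type after restriction.

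For the ``if'' direction, suppose $\mm'=\mm+j(1,\dots,1)$ and $\kk'=\kk+j(1,\dots,1)$. By the fact recalled in the excerpt, $\mm$ and $\mm'$ restrict to the same irreducible representation of $\SU(n+1)$, so the representations $U$ underlying $\Phi^{\mm,\kk}$ and $\Phi^{\mm',\kk'}$ are the same. For the $K$-types: by \eqref{Ktipos} the $K$-type attached to $\Phi^{\mm',\kk'}$ is $\kk'+(s_{\kk'}-s_{\mm'})(1,\dots,1)$, and a direct substitution gives $s_{\kk'}=s_\kk+nj$, $s_{\mm'}=s_\mm+(n+1)j$, so $s_{\kk'}-s_{\mm'}=s_\kk-s_\mm-j$, hence $\kk'+(s_{\kk'}-s_{\mm'})(1,\dots,1)=\kk+j(1,\dots,1)+(s_\kk-s_\mm-j)(1,\dots,1)=\kk+(s_\kk-s_\mm)(1,\dots,1)$, which is the $K$-type of $\Phi^{\mm,\kk}$. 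Thus both the $G$-data and the $K$-type agree and the two spherical functions are equivalent.

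For the ``only if'' direction, assume $\Phi^{\mm,\kk}$ and $\Phi^{\mm',\kk'}$ are equivalent. Then the underlying $\SU(n+1)$-representations are equivalent, so by the cited fact there is $j\in\ZZ$ with $m'_i=m_i+j$ for all $i$, i.e. $\mm'=\mm+j(1,\dots,1)$. It remains to deduce $\kk'=\kk+j(1,\dots,1)$. Equivalence of the spherical functions forces equality of $K$-types, so $\kk'+(s_{\kk'}-s_{\mm'})(1,\dots,1)=\kk+(s_\kk-s_\mm)(1,\dots,1)$ as $\U(n)$-parameters; writing $\kk'=\kk+\ell(1,\dots,1)+(s_\mm-s_{\mm'}+s_{\kk'}-s_\kk)(1,\dots,1)$ and tracking sums, one finds $\kk'$ differs from $\kk$ by a constant vector $c(1,\dots,1)$ with $c$ determined by the relation; substituting $s_{\mm'}=s_\mm+(n+1)j$ one gets $c=j$ after solving a linear equation in $c$. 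The one point requiring care — and the main obstacle — is to justify that equivalent spherical functions really do have the same $K$-type $\delta$ and arise from equivalent $G$-representations; this follows from the remarks after the definition, since $k\mapsto\Phi(k)$ is a representation of $K$ whose irreducible constituents all lie in $\delta$, so $\delta$ is an invariant of $\Phi$, and an irreducible spherical function on the compact group $G$ determines the irreducible $G$-representation it comes from. Once this structural fact is in hand, both directions are the short substitutions above, and the uniqueness of $\kk$ inside $\mm$ guarantees there is no ambiguity in the association $\mm,\kk\mapsto\Phi^{\mm,\kk}$.
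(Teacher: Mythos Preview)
Your proposal is correct and follows essentially the same route as the paper: both arguments rest on the criterion that $\Phi^{\mm,\kk}\sim\Phi^{\mm',\kk'}$ iff the underlying $G$-representations are equivalent and the $K$-types given by \eqref{Ktipos} coincide, then reduce to the linear algebra of the shifts. The paper invokes \cite{T1} for that structural criterion while you sketch it from the definitions; your ``only if'' paragraph has a stray $\ell$ and could be tightened to the direct computation $\kk'-\kk=c(1,\dots,1)$ with $c=(s_\kk-s_\mm)-(s_{\kk'}-s_{\mm'})$, whence $(n+1)c=(n+1)j$.
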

\begin{proof}
The spherical functions $\Phi^{\mm,\kk}$ and
  $\Phi^{\mm',\kk'}$ are equivalent if and only if
  $\mm$ and $\mm'$ are equivalent and the $K$-types of both spherical functions are the
  same, see the discussion in p. 85 of \cite{T1}.  We know that $\mm\simeq \mm'$ if and only if
$$\mm'=\mm+j(1,\dots, 1)\quad \text{ for some } j\in \ZZ.$$
Besides, the $K$ types are the same if and only if
$$k_{i}+s_\kk-s_\mm= k_{i}'+s_{\kk'}-s_{\mm'}\qquad \text{ for all } i=1,\dots,n.$$
Therefore $\kk'=\kk+p(1,\dots,1)$, and now it is easy to see that
$p=j$. \qed
\end{proof}

\

The standard representation of  $\U(n+1)$ on  $\CC^{n+1}$ is  irreducible and its
highest weight is
$(1,0,\dots , 0)$. Similarly the representation of  $\U(n+1)$ on the dual of
$\CC^{n+1}$ is  irreducible and its highest weight is $(0,\dots,0 , -1)$. Therefore we have that
$$\CC^{n+1}=V_{(1,0,\cdots, 0)}\quad \text{ and }\quad (\CC^n)^*=V_{(0,\dots,0,-1)}.$$
For any irreducible representation $\mm$ of  $\U(n+1)$  the tensor product
$V_\mm\otimes \CC^{n+1}$  decomposes as a direct sum of $\U(n+1)$-irreducible representations in the following way
\begin{equation}\label{tensor1}
  V_\mm\otimes \CC^{n+1}\simeq V_{\mm+\ee_1}\oplus V_{\mm+\ee_2}\oplus \cdots \oplus V_{\mm+\ee_{n+1}},
\end{equation}
and
\begin{equation}\label{tensor2}
  V_\mm\otimes (\CC^{n+1})^*\simeq V_{\mm-\ee_1}\oplus V_{\mm-\ee_2}\oplus \cdots \oplus V_{\mm-\ee_{n+1}},
\end{equation}
where $\{\ee_1,\cdots,\ee_{n+1}\}$ is the cannonical basis of $\CC^{n+1}$, see \cite{VK}.
\begin{remark} The irreducible modules on the right hand side of \eqref{tensor1} and \eqref{tensor2} whose
parameters $(m'_1,m'_2,\dots ,m'_{n+1})$ do not
satisfy the conditions $m_1'\geq m_2'\geq \dots \geq m_{n+1}'$ have to be omitted.
\end{remark}

Starting from \eqref{tensor1} and \eqref{tensor2}, the following theorem is proved in \cite{P}.

\begin{thm}\label{multiplicationformula}
 Let $\phi$ and $\psi$ be, respectively, the one dimensional spherical functions associated to the
 standard representation of $G$ and its dual. Then
\begin{align*}
\phi(g)\Phi^{\bf m,\kk}(g)&=\sum_{i=1}^{n+1}a_i^2(\mm, \kk) \Phi^{{\mathbf m}+{\mathbf
e}_i, \kk}(g)\\
\psi(g)\Phi^{\bf m, \kk}(g)&=\sum_{i=1}^{n+1}b_i^2(\mm, \kk) \Phi^{{\bf m}-{\bf
e}_i,\kk}(g).
\end{align*}
The constants $a_i(\mm, \kk)$ and $b_i(\mm, \kk)$  are given by
\begin{equation}\label{coefficientsab}
\begin{split}
a_i(\mm,\kk)&=\left\vert\frac{\prod_{j=1}^{n}(k_j-m_i-j+i-1)}
{\prod_{j\ne i}(m_j-m_i-j+i)}\right\vert^{1/2},\\
b_i(\mm,\kk)&=\left\vert\frac{\prod_{j=1}^{n}(k_j-m_i-j+i)}
{\prod_{j\ne i}(m_j-m_i-j+i)}\right\vert^{1/2}.
\end{split}
\end{equation}
Moreover
\begin{equation}\label{sumauno}
\sum_{i=1}^{n+1}a_i^2(\mm,\kk)=\sum_{i=1}^{n+1}b_i^2(\mm,\kk)=1.
\end{equation}
\end{thm}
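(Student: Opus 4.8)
The plan is to reduce both identities to a Clebsch--Gordan computation for the decompositions \eqref{tensor1} and \eqref{tensor2}, and then to extract \eqref{coefficientsab} from the Gelfand--Tsetlin description of those tensor products. Fix $G$-invariant inner products on all the spaces $V_\mm$; then each $U_\mm$ is unitary and the operator $P(\delta)=\int_K\chi_\delta(k^{-1})U_\mm(k)\,dk$ is the \emph{orthogonal} projection of $V_\mm$ onto its $\delta$-isotypic component. By \eqref{Ktipos} the $K$-type $\delta$ of $\Phi^{\mm,\kk}$ has multiplicity one in $V_\mm|_K$, so $P(\delta)V_\mm$ is exactly the canonical copy $V_\kk\subset V_\mm$, and for $a,b\in V_\kk$ self-adjointness of $P(\delta)$ gives $\langle\Phi^{\mm,\kk}(g)a,b\rangle=\langle U_\mm(g)a,b\rangle$. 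Writing $\pi$ for the standard representation of $G$ on $\CC^{n+1}$, its restriction to $K$ splits as the standard representation of $\U(n)$ (on $\ee_1,\dots,\ee_n$) plus the line $\CC\ee_{n+1}$, on which $K$ acts by the character $(-1,\dots,-1)$; hence the one dimensional spherical function $\phi$ — the one attached to $\pi$ and this $K$-type by the construction recalled above — is $\phi(g)=\langle\pi(g)\ee_{n+1},\ee_{n+1}\rangle$, the lower right entry of $g$. Combining these two observations,
\[
\phi(g)\,\langle\Phi^{\mm,\kk}(g)a,b\rangle=\big\langle\bigl(U_\mm(g)\otimes\pi(g)\bigr)(a\otimes\ee_{n+1}),\;b\otimes\ee_{n+1}\big\rangle,\qquad a,b\in V_\kk.
\]

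Now decompose $V_\mm\otimes\CC^{n+1}=\bigoplus_i V_{\mm+\ee_i}$ as in \eqref{tensor1}, orthogonally and $G$-equivariantly, and let $p_i$ be the corresponding projections. Inserting $\sum_i p_i=\mathrm{id}$ on the right turns the displayed quantity into $\sum_i\langle U_{\mm+\ee_i}(g)\,p_i(a\otimes\ee_{n+1}),\,p_i(b\otimes\ee_{n+1})\rangle$. Since $a\in V_\kk$ and $\ee_{n+1}$ spans the $(-1,\dots,-1)$-line, $a\otimes\ee_{n+1}$ lies in the $K$-submodule $V_\kk\otimes\CC\ee_{n+1}$ of $V_\mm\otimes\CC^{n+1}$, which is irreducible of $K$-type $\delta'=\delta+(-1,\dots,-1)$; by \eqref{Ktipos} this is precisely the $K$-type of every $\Phi^{\mm+\ee_i,\kk}$. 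As $\delta'$ has multiplicity one in each $V_{\mm+\ee_i}$, the restriction of $p_i$ to $V_\kk\otimes\CC\ee_{n+1}$ is, by Schur's lemma, $\alpha_i$ times a fixed unitary isomorphism onto the canonical copy $V_\kk^{(i)}\subset V_{\mm+\ee_i}$. Feeding this back and using $\langle U_{\mm+\ee_i}(g)\,\cdot\,,\cdot\,\rangle=\langle\Phi^{\mm+\ee_i,\kk}(g)\,\cdot\,,\cdot\,\rangle$ on $V_\kk^{(i)}$ (the matrix-coefficient identity again) yields the first formula of the theorem with $a_i(\mm,\kk)=|\alpha_i|$, after the standard identification of each $\Phi^{\mm+\ee_i,\kk}$ with an $\End(V_\kk)$-valued function via that unitary. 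Taking $a=b$ of norm one and using orthogonality of the decomposition gives $\|a\otimes\ee_{n+1}\|^2=\sum_i\|p_i(a\otimes\ee_{n+1})\|^2$, i.e.\ $\sum_i a_i^2=1$; this is also just the value at $g=e$ of the identity, using $\phi(e)=I$ and $\Phi(e)=I$. The second formula is proved identically from $(\CC^{n+1})^*$ and \eqref{tensor2}, with $\ee_{n+1}$ replaced by the dual vector $\ee_{n+1}^*$ (on which $K$ acts by $(1,\dots,1)$) and $\phi$ by $\psi$.

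It remains to evaluate $a_i(\mm,\kk)^2=|\alpha_i|^2$ (and the analogous $b_i^2$) explicitly, and this is where essentially all of the work lies; everything up to here is formal. Testing the map $a\mapsto p_i(a\otimes\ee_{n+1})$ on a $\U(n)$-highest weight vector $v$ of $V_\kk\subset V_\mm$ — whose tensor with $\ee_{n+1}$ is again a $\U(n)$-highest weight vector, of weight $\delta'$ — reduces $|\alpha_i|^2$ to the squared norm of the $V_{\mm+\ee_i}$-component of $v\otimes\ee_{n+1}$ relative to $\|v\|^2$, i.e.\ to a ratio of norms of highest weight vectors in \eqref{tensor1}. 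These are the classical Wigner coefficients for the chain $\U(n+1)\supset\U(n)$, which one reads off from the Gelfand--Tsetlin realization of \eqref{tensor1}--\eqref{tensor2} (as in \cite{VK}); carrying out that bookkeeping produces exactly the formulas \eqref{coefficientsab}. Finally one checks consistency with the remark after \eqref{tensor2}: whenever $\mm\pm\ee_i$ fails to be dominant, or more generally whenever $\kk$ fails to interlace $\mm\pm\ee_i$ (so that the summand $\Phi^{\mm\pm\ee_i,\kk}$ does not exist), the offending interlacing inequality forces a factor $k_{i-1}-m_i$, resp.\ $k_i-m_i$, in the numerator of $a_i$, resp.\ $b_i$, to vanish; hence the missing term contributes $0$ and both sums may be written over all $i=1,\dots,n+1$ as stated. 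The main obstacle is thus entirely the explicit Gelfand--Tsetlin computation of the reduced matrix elements; the reduction to it is soft.
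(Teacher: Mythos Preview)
The paper does not actually prove this theorem: it only says ``Starting from \eqref{tensor1} and \eqref{tensor2}, the following theorem is proved in \cite{P}.'' Your argument does exactly what that sentence advertises --- it derives the multiplication formulas from the Clebsch--Gordan decompositions \eqref{tensor1}--\eqref{tensor2} by projecting $a\otimes\ee_{n+1}$ onto the pieces $V_{\mm+\ee_i}$ and invoking multiplicity one of the relevant $K$-type, then reads off the explicit coefficients from the Gelfand--Tsetlin calculus in \cite{VK}. This is the standard route and is consistent with everything the paper uses later (in particular your identification $\phi(g)=\langle\pi(g)\ee_{n+1},\ee_{n+1}\rangle$ is exactly what is computed in the proof of Theorem~\ref{multiplicationformula1}, and your $K$-type bookkeeping matches \eqref{Ktipos}). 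The only part you do not carry out in full is the Gelfand--Tsetlin evaluation of the Wigner coefficients giving \eqref{coefficientsab}; you flag this honestly as the genuine computational content, and since the paper itself outsources the entire proof to \cite{P} and \cite{VK}, there is nothing further to compare.
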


\

Our Lie group  $G$ has the following polar decomposition $G=KAK$, where the abelian
subgroup A of G consists of all matrices of the form
\begin{equation}\label{atheta}
a= \left(\begin{matrix} \cos \theta& 0& \sin \theta\\ 0&I_{n-1}&0\\ -\sin \theta & 0&\cos
\theta\end{matrix}\right),\qquad \theta\in \RR.
\end{equation}
(Here $I_{n-1}$ denotes the identity matrix of size $n-1$). Since an
irreducible spherical function $\Phi$  of $G$ of type $\delta$
satisfies $\Phi(kgk')= \Phi(k)\Phi(g)\Phi(k')$ for all $k,k'\in K$
and $g\in G$, and $\Phi(k)$ is an irreducible representation of $K$
in the class $\delta$, it follows that $\Phi$ is determined by its
restriction to $A$ and its $K$-type. Hence, from now on, we shall
consider its restriction to $A$.

Let $M$
be the group consisting of all elements
of the form
$$m= \left(\begin{matrix} 1& 0&0\\ 0&B&0\\ 0&
0&1\end{matrix}\right), \qquad   B\in \U(n-1).$$
Thus $M$ is isomorphic to $\U(n-1)$ and its finite dimensional irreducible
representations are parameterized by the $(n-1)$-tuples of integers
$$\mathbf t=(t_{1}, t_{2},\dots, t_{n-1})$$
such that $t_{1}\geq t_{2}\geq \cdots \geq t_{n-1}$.

If $a\in A$, then  $\Phi^{\mm,\kk}(a)$ commutes with $\Phi^{\mm,\kk}(m)$
for all $m\in M$. In fact we have
$$\Phi^{\mm,\kk}(a)\Phi^{\mm,\kk}(m)=\Phi^{\mm,\kk}(am)=\Phi^{\mm,\kk}(ma)
=\Phi^{\mm,\kk}(m)\Phi^{\mm,\kk}(a).$$ The representation of $\U(n)$
in $V_\kk\subset V_\mm$, $\kk=(k_{1}, \dots ,k_{n})$ restricted to
$\U(n-1)$ decomposes as the following direct sum
\begin{equation}\label{Msubrepresentations}
V_\kk=\bigoplus_{\mathbf t\in \hat M} V_{\mathbf t},
\end{equation}
where the sum is over all the representations $\mathbf t=(t_{1},
\dots , t_{n-1})\in \hat M$ such that the coefficients of $\mathbf
t$ interlace the coefficients of $\kk$, that is $k_{i}\geq t_{i}\geq
k_{i+1}$, for all $i=1, \dots , n-1$. Since each $V_{\mathbf t}
\subset V_\kk$ appears only once, by Schur's Lemma, it follows that
$\Phi^{\mm,\kk}(a)|_{V_{\mathbf t}}=\phi^{\mm,\kk}_{\mathbf t}(a)
\text{Id}|_{V_{\mathbf t}}$, where $\phi^{\mm,\kk}_{\mathbf
t}(a)\in\CC$ for all $a\in A$.

\smallskip
By using Proposition \ref{equivalencia}, given a spherical function
$\Phi^{\mm,\kk}$ we can assume that
$s_\kk-s_\mm=0$. In such a case the $K$-type of $\Phi^{\mm,\kk}$ is
$\kk$, see \eqref{Ktipos}.
Now it is easy to see that if $(\mm,\kk)$  is one of such a pair then
\begin{equation}\label{n+1tupla}
\mm=\mm(w,\rr)=(w+k_{1},\, r_1+k_{2}, \, \dots \, , r_{n-1}+k_{n},
-(w+r_1+\cdots+ r_{n-1}) ),
\end{equation}
where  $0\leq w$, $k_{n}\geq -(w+r_1+\cdots+ r_{n-1})$ and $0\leq
r_i \leq k_{i}-k_{i+1}$ for $i=1,\dots n-1$. Thus if we assume
$w\ge\max\{0,-k_n\}$ and  $0\leq r_i \leq k_{i}-k_{i+1}$ for
$i=1,\dots n-1$ all the conditions are satisfied.

We observe that the representations $\mathbf t$ of $M$ appearing in
the right hand side of \eqref{Msubrepresentations} are of the form
$\mathbf t=\mathbf r+\kk'$, where $\kk'=(k_2,\dots,k_n)$ and
$\mathbf r$ is in the following set $$\Omega=\{\rr=(r_1, \dots,
r_{n-1}): 0\leq r_i\leq k_{i}-k_{i+1}\}.$$ In particular  the number
of $M$-modules in the decomposition of $V_\kk$ is
$$N=\prod_{i=1}^{n-1} (k_{i}-k_{i+1} +1).$$

 We will identify $\Phi^{\mm,\kk}(a)$ with the column vector
 $(\Phi^{\mm,\kk}_{\mathbf r}(a))_{\mathbf r\in\Omega}$ of $N$ complex valued functions
 $\Phi^{\mm,\kk}_{\mathbf r}(a)$ indexed by $\Omega$, where
 $\Phi^{\mm,\kk}_{\mathbf r}(a)=\phi^{\mm,\kk}_{\mathbf r+\kk'}(a)$, $a\in A$.

From now on we fix  $\kk\in\hat K$ and take $\mm=\mm(w,\rr)$ as in
\eqref{n+1tupla} for all $w\ge\max\{0,-k_n\}$ and $\rr\in\Omega$.
Also in the open subset $\{a(\theta)\in A:0<\theta<\pi/2\}$ of $A$,
we introduce the coordinate $t=\cos^2(\theta)$ and define on the
open interval $(0,1)$ the complex valued function $F_{\rr,\mathbf
s}(w,t)=\Phi^{\mm(w,\rr),\kk}_{\mathbf s}(a(\theta))$ and the
corresponding matrix function
$$F(w,t)=(F_{\rr,\mathbf s}(w,t))_{(\rr, \mathbf s)\in\Omega\times\Omega}.$$

For each $w\geq  \max\{0, -k_{n}\}$ we also define the following matrices of type $\Omega\times\Omega$
\begin{equation}\label{ABC}
A_w=((A_w)_{\rr,\mathbf s}), \quad B_w=((B_w)_{\rr,\mathbf s}), \quad C_w=((C_w)_{\rr,\mathbf s}),
\end{equation}
where
\begin{align*}
     (A_w)_{\rr,\mathbf s} & =\begin{cases}
       a^2_{n+1}(\mm(w,\rr))b^2_{1}(\mm(w,\rr)+\ee_{n+1}) & \quad\text{ if } \mathbf s=\rr\\
       a^2_{j+1}(\mm(w,\rr))b^2_{1}(\mm(w,\rr)+\ee_{j+1})  & \quad \text{ if } \mathbf s=\rr+\ee_j\\
       0 & \quad  \text{ otherwise}
     \end{cases} \displaybreak[0]
\\
     (C_w)_{\rr,\mathbf s} & =\begin{cases}
       a^2_{1}(\mm(w,\rr))b^2_{n+1}(\mm(w,\rr)+\ee_{1})) & \quad\text{ if } \mathbf s=\rr\\
       a^2_1(\mm(w,\rr))b^2_{j+1}(\mm(w,\rr)+\ee_1)  & \quad \text{ if } \mathbf s=\rr-\ee_j\\
       0 & \quad  \text{ otherwise}
     \end{cases} \displaybreak[0]
\\
     (B_w)_{\rr,\mathbf s} & =\begin{cases}
      \displaystyle \sum_{1\leq j\leq n+1}a^2_j(\mm(w,\rr))b^2_j(\mm(w,\rr)+\ee_j)) & \quad\text{ if } \mathbf s=\rr\\
       a^2_{j+1}(\mm(w,\rr))b^2_{n+1}(\mm(w,\rr)+\ee_{j+1})  & \quad \text{ if } \mathbf s=\rr+\ee_j\\
       a^2_{n+1}(\mm(w,\rr))b^2_{j+1}(\mm(w,\rr)+\ee_{n+1}) & \quad\text{ if } \mathbf s=\rr-\ee_j\\
       a^2_{j+1}(\mm(w,\rr))b^2_{i+1}(\mm(w,\rr)+\ee_{j+1}) & \quad\text{ if } \mathbf s=\rr+\ee_j-\ee_i \\
       0 & \quad  \text{ otherwise}
     \end{cases}
  \end{align*}
where $1\le i, j\le n-1$, and $a_i^2(\mm(w,\rr))=a_i^2(\mm,\kk)$,
$b_i^2(\mm(w,\rr)+\ee_j)=b_i^2((\mm+\ee_j,\kk))$ for $1\le i,j\le
n+1$, see \eqref {coefficientsab}.

\begin{thm} \label{multiplicationformula1} For each fixed $K$-type
 $\kk=(k_{1}, \dots k_{n})$, for all integers
 $w\ge\max\{0,-k_{n}\}$ and
all $0<t<1$ we have
\begin{equation}\label{3term}
tF(w,t)= A_wF(w-1,t)+ B_wF(w,t)+ C_wF(w+1,t).
\end{equation}
\end{thm}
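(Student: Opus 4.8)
**

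The plan is to derive the three-term recursion by combining the two multiplication formulas of Theorem \ref{multiplicationformula} in succession and then decomposing the result according to the $M$-types. The product $\phi(g)\psi(g)$ is a $K$-invariant (indeed $K\times K$-invariant) scalar function on $G$, and since $\phi,\psi$ are spherical of the trivial $K$-type, the operator $\Phi\mapsto (\phi\psi)\,\Phi$ preserves the space of spherical functions of type $\kk$ and, restricted to $A$, acts as multiplication by the scalar $\phi(a)\psi(a)$. A direct computation on the abelian subgroup $A$ using \eqref{atheta} shows $\phi(a(\theta))\psi(a(\theta)) = |\cos\theta|^2 = t$ in the coordinate $t=\cos^2\theta$. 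So the left-hand side $tF(w,t)$ is exactly $(\phi\psi)\Phi^{\mm(w,\rr),\kk}$ restricted to $A$ and expressed in the $\Omega$-basis.

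Next I would expand $(\phi\psi)\Phi^{\mm,\kk}$ using the two formulas in two stages. First apply $\psi(g)\Phi^{\mm,\kk}(g)=\sum_i b_i^2(\mm,\kk)\,\Phi^{\mm-\ee_i,\kk}(g)$, then multiply each term by $\phi(g)$ and apply the first formula to each $\Phi^{\mm-\ee_i,\kk}$:
\begin{equation*}
\phi(g)\psi(g)\Phi^{\mm,\kk}(g)=\sum_{i=1}^{n+1}\sum_{j=1}^{n+1} b_i^2(\mm,\kk)\,a_j^2(\mm-\ee_i,\kk)\,\Phi^{\mm-\ee_i+\ee_j,\kk}(g).
\end{equation*}
(One could equally apply $\phi$ first; the coefficients in \eqref{ABC} are written in the order ``$+\ee$ then $-\ee$'', matching the convention used there, and one checks the two orders agree via \eqref{sumauno} and the explicit formulas \eqref{coefficientsab}.) Now group the terms by the value of $\mm-\ee_i+\ee_j$ as a function of $\mm(w,\rr)$. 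Recall from \eqref{n+1tupla} that changing $w$ by $\pm1$ shifts the first and last coordinates of $\mm$, while changing $r_\ell$ by $\pm1$ shifts the $(\ell+1)$-st and last coordinates; the $K$-type must stay $\kk$, i.e.\ $s_\mm$ is unchanged, which is why only combinations $\ee_j-\ee_i$ (not $\pm\ee_i$ alone) survive on the right. Matching indices: $\ee_{n+1}-\ee_1$ gives $w\to w-1$; $\ee_1-\ee_{n+1}$ gives $w\to w+1$; $\ee_{n+1}-\ee_{j+1}$ and $\ee_{j+1}-\ee_{n+1}$ ($1\le j\le n-1$) give $r_j\to r_j\mp1$ at fixed $w$; $\ee_{j+1}-\ee_{i+1}$ gives $r_j\to r_j-1, r_i\to r_i+1$; and the diagonal terms ($i=j$, plus the ``opposite corner'' cancellations) give $\mm$ itself. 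Reading off the coefficients in each case and comparing with the definitions in \eqref{ABC}, one finds precisely the entries of $A_w$, $B_w$, $C_w$.

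The one genuine subtlety — and the step I expect to be the main obstacle — is bookkeeping at the \emph{boundary} of the index set $\Omega$ and at $w=\max\{0,-k_n\}$. When $\mm-\ee_i+\ee_j$ fails the dominance condition $m'_1\ge\cdots\ge m'_{n+1}$, the corresponding module is omitted (the Remark after \eqref{tensor2}), so that spherical function does not occur; correspondingly the coefficient $a_j(\mm-\ee_i,\kk)$ or $b_i(\mm,\kk)$ must vanish. This is exactly what the explicit products in \eqref{coefficientsab} deliver: a numerator factor of the form $k_\ell - m_i - \ell + i$ (or $-1$) becomes zero precisely on the offending hyperplane, so the ``illegal'' terms drop out automatically and no separate boundary analysis is needed — but verifying this vanishing, and checking that the $\rr\pm\ee_j\notin\Omega$ terms are likewise killed, is the careful part. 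Once that is in hand, evaluating all functions at $a(\theta)$, passing to the variable $t$, and writing $F_{\rr,\mathbf s}(w,t)=\Phi^{\mm(w,\rr),\kk}_{\mathbf s}(a(\theta))$ turns the identity above into \eqref{3term}, for every $0<t<1$ and every $w\ge\max\{0,-k_n\}$. \qed
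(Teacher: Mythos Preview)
Your approach is correct and is essentially the same as the paper's: both compute $\phi(a(\theta))\psi(a(\theta))=\cos^2\theta=t$ and then apply the two multiplication formulas of Theorem~\ref{multiplicationformula} in succession, with the matrices $A_w,B_w,C_w$ in \eqref{ABC} set up precisely to record the resulting coefficients. The only cosmetic difference is the order in which you compose: the entries in \eqref{ABC} are written with $\phi$ applied first (hence the pattern $a_j^2(\mm)\,b_i^2(\mm+\ee_j)$), so expanding in that order gives a direct match and spares you the separate check that the two orders agree.
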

\begin{proof}  This result is a consequence of Theorem \ref{multiplicationformula}
and of the appropriate definitions of $A_w, B_w, C_w$ given in
\eqref{ABC}, when we take $g=a(\theta)$.

 We recall that  $\phi(g)$ and $\psi(g)$ are the one dimensional spherical
 functions associated to the $G$-modules $\CC^{n+1}$ and $(\CC^{n+1})^*$, respectively.
 A direct computation gives
$$\phi(a(\theta))=\langle a(\theta)e_{n+1}, e_{n+1}\rangle =\cos \theta. $$
and $$\psi(a(\theta))=\langle a(\theta)\lambda_{n+1},
\lambda_{n+1}\rangle =\cos \theta.$$ Then
$\phi(a(\theta))\psi(a(\theta))=\cos^2(\theta)=t$. \qed
\end{proof}

\

If $g\in G=\SU(n+1)$ let $A(g)$ denote the $n\times n$ left upper
corner of  $g$, and let $\A$ be the dense open subset of all $g\in
G$ such that  $A(g)$ is nonsingular. As in \cite{PT3} in order to
determine all irreducible spherical functions of $G$ of type
$\kk=(k_1,\dots, k_n)$ an auxiliary  function
$\Phi_\kk:\A\longrightarrow \End(V_\kk)$ is introduced. It is
defined by $\Phi_\kk(g)=\pi(A(g))$ where  $\pi$ stands for the
unique holomorphic representation of $\GL(n,\CC)$ corresponding to
the parameter $\kk$.  It turns out that if $k_{n}\geq 0$ then
$\Phi_\kk =\Phi^{\mm,\kk}$ where $\mm=(k_1,\dots,k_n,0)$.

Then instead of looking at a general spherical function
$\Phi^{w,\rr}=\Phi^{\mm(w,\rr),\kk}$ of type $\kk$ we look at the
function $H^{w,\rr}(g)=\Phi^{w,\rr}(g)\Phi_\kk (g)^{-1}$ which is
well defined on $\A$.

As before we construct the matrix function
$$\tilde H(w,t)=(\tilde H_{\mathbf r,\mathbf s}(w,t))_{(\mathbf r,\mathbf s)\in\Omega\times\Omega}.$$
where $\tilde H_{\mathbf r,\mathbf s}(w,t)=H^{w,\mathbf r}_{\mathbf s}(a(\theta))$, $t=\cos\theta\in(0,1)$.

\

Let $\Psi(t)=(\Psi_{\mathbf r,\mathbf s}(t))_{(\mathbf r,\mathbf
s)\in\Omega\times\Omega}$ be the transpose of $\tilde H(0,t)$, i.e.
$\Psi_{\mathbf r,\mathbf s}(t)=\tilde H_{\mathbf s,\mathbf r}(0,t)$.
In \cite{PT3} the following crucial theorem is proved.

\begin{thm} If $k_n\ge0$, then $\tilde H_{\mathbf r,\mathbf s}(w,t)$, $\tilde H(w,t)$ and
$$\tilde P_w(t)=\tilde H(w,t)\Psi(t)^{-1}$$
are  polynomial functions on the variable $t$ whose degrees are
\begin{equation}\label{degree}
\begin{split}
\deg\tilde H_{\mathbf r,\mathbf s}(w,t)&=w+\sum_{i=1}^{n-1}\min\{r_i,s_i\},\\
\deg\tilde H(w,t)&=w+k_1-k_n,\\
\deg\tilde P_w(t)&=w.
\end{split}
\end{equation}
\end{thm}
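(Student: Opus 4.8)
The plan is to establish the three degree formulas in \eqref{degree} by bootstrapping from the three-term recursion \eqref{3term} of Theorem \ref{multiplicationformula1}, together with the known degree properties at $w=0$ coming from the holomorphic nature of $\Phi_\kk$. First I would translate the recursion for $F(w,t)$ into a recursion for $\tilde H(w,t)$. Since $H^{w,\rr}(g)=\Phi^{w,\rr}(g)\Phi_\kk(g)^{-1}$ and $\Phi_\kk=\Phi^{\mm(0,\mathbf 0),\kk}$ with $\mm(0,\mathbf 0)=(k_1,\dots,k_n,0)$, multiplying \eqref{3term} on the right by the (matrix-of-scalars realization of) $\Phi_\kk(a(\theta))^{-1}$ and using that $\Phi_\kk(a(\theta))$ acts as a scalar on each $M$-isotypic component yields a relation of the shape $t\,\tilde H(w,t)=A_w\tilde H(w-1,t)+B_w\tilde H(w,t)+C_w\tilde H(w+1,t)$ after absorbing the appropriate power of $\cos^2\theta=t$; one must check that $\Phi_\kk(a(\theta))^{-1}_{\mathbf t}$ is a constant times $t^{-c_{\mathbf t}}$ for an explicit exponent $c_{\mathbf t}$, which is where the combinatorial input $\deg\Phi_\kk = k_1-k_n$ and the precise per-component degrees enter. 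The key structural point is that $C_w$ is invertible (its diagonal entries $a_1^2(\mm(w,\rr))b_{n+1}^2(\mm(w,\rr)+\ee_1)$ are nonzero and, by inspecting \eqref{coefficientsab}, $C_w$ is triangular with respect to a suitable ordering of $\Omega$), so the recursion can be solved for $\tilde H(w+1,t)$ in terms of $\tilde H(w,t)$ and $\tilde H(w-1,t)$.

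Next I would run an induction on $w$. The base cases $w=0$ and $w=1$ require a direct argument: for $w=0$ the entry $\tilde H_{\rr,\mathbf s}(0,t)$ is, up to the scalar normalization, $\Phi^{\mm(0,\rr),\kk}_{\mathbf s}(a(\theta))\Phi_\kk(a(\theta))^{-1}_{\mathbf s}$, and one reads off its polynomiality and the degree $\sum_i\min\{r_i,s_i\}$ from the explicit description of $\Phi_\kk(g)=\pi(A(g))$ as a polynomial in the matrix entries of the upper-left block — the $\min$ arising because in the quotient $\Phi^{\mm(0,\rr),\kk}/\Phi_\kk$ the common factors in the two Pieri-type expansions cancel. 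Then assuming the formula $\deg\tilde H_{\rr,\mathbf s}(w-1,t)$ and $\deg\tilde H_{\rr,\mathbf s}(w,t)$ one uses $t\,\tilde H(w,t)=A_w\tilde H(w-1,t)+B_w\tilde H(w,t)+C_w\tilde H(w+1,t)$, rewritten as $\tilde H(w+1,t)=C_w^{-1}\bigl(t\,\tilde H(w,t)-B_w\tilde H(w,t)-A_w\tilde H(w-1,t)\bigr)$, to see that every entry of $\tilde H(w+1,t)$ is a polynomial and to bound its degree from above by $w+1+\sum_i\min\{r_i,s_i\}$; the matching lower bound (that the degree is exactly this, not smaller) follows by examining the leading coefficient, which propagates because $C_w^{-1}$ times the leading term of $t\,\tilde H(w,t)$ does not vanish. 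Summing the entrywise statement and taking the maximum over $(\rr,\mathbf s)$ gives $\deg\tilde H(w,t)=w+k_1-k_n$ (the maximum of $\sum_i\min\{r_i,s_i\}$ over $\Omega\times\Omega$ being $\sum_i(k_i-k_{i+1})=k_1-k_n$, attained at $\rr=\mathbf s$ maximal).

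The statement about $\tilde P_w(t)=\tilde H(w,t)\Psi(t)^{-1}$ is then obtained by combining the degree of $\tilde H(w,t)$ with the degree of $\det\Psi(t)$ and the cofactor structure of $\Psi(t)^{-1}$: since $\Psi(t)$ is the transpose of $\tilde H(0,t)$, its determinant is a polynomial of degree $\sum_{\rr\in\Omega}$ (of the "diagonal" contributions $r_i$'s summed appropriately) $=2(k_1-k_n)\cdot(\text{something})$ — more precisely one shows $\deg\det\Psi(t)$ equals the degree forced so that $\tilde H(w,t)\operatorname{adj}\Psi(t)$, of degree $(w+k_1-k_n)+(\deg\det\Psi - \text{min cofactor degree})$, is divisible by $\det\Psi(t)$ with quotient of degree exactly $w$. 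A cleaner route, which I would prefer, is to argue directly that $\tilde P_w(t)$ satisfies its own three-term recursion $t\tilde P_w=A_w\tilde P_{w-1}+B_w\tilde P_w+C_w\tilde P_{w+1}$ (inherited from that of $\tilde H$, since $\Psi(t)^{-1}$ is applied on the right uniformly in $w$), with $\tilde P_0(t)=I$ of degree $0$ and $\tilde P_1(t)=C_0^{-1}(tI-B_0)$ of degree $1$; then the same induction as above, now with a trivial base case, gives $\deg\tilde P_w(t)=w$ and simultaneously reconfirms that these are genuine matrix-valued orthogonal polynomials. The main obstacle is the first step: pinning down the exact exponent $c_{\mathbf t}$ in $\Phi_\kk(a(\theta))^{-1}_{\mathbf t}\propto t^{-c_{\mathbf t}}$ and verifying that, after clearing denominators, the recursion for $\tilde H$ has polynomial (indeed the asserted) coefficients $A_w,B_w,C_w$ with $C_w$ invertible — everything else is a bookkeeping induction. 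This is exactly the content that \cite{PT3} supplies, so in the write-up one may cite it for the base cases and present the induction as the new observation.
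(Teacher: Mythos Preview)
The paper gives no proof of this theorem: the sentence immediately preceding the statement reads ``In \cite{PT3} the following crucial theorem is proved,'' and the result is simply quoted. So there is no argument in the present paper to compare your proposal against; everything is deferred to \cite{PT3}.

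On the merits of your sketch, the passage from the recursion for $F$ to one for $\tilde H$ is simpler than you fear. Since $\Phi_\kk(a(\theta))$ is scalar on each $M$-isotypic block, $\tilde H(w,t)=F(w,t)D(t)^{-1}$ for a diagonal matrix $D(t)$ independent of $w$; right-multiplying \eqref{3term} by $D(t)^{-1}$ yields $t\,\tilde H(w,t)=A_w\tilde H(w-1,t)+B_w\tilde H(w,t)+C_w\tilde H(w+1,t)$ with \emph{the same} coefficients, and no powers of $t$ need to be absorbed. Given the base case, your induction then delivers polynomiality and the upper bound $\deg\tilde H_{\rr,\mathbf s}(w,t)\le w+\sum_i\min\{r_i,s_i\}$, once one notes that $(C_w^{-1})_{\rr,\mathbf q}\ne0$ forces $\mathbf q\le\rr$ coordinatewise and that $A_w,B_w$ raise at most one coordinate by one.

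Two genuine gaps remain, however. First, the matching lower bound on the entrywise degree is not bookkeeping: several terms of the maximal degree $w+1+\sum_i\min\{r_i,s_i\}$ appear in $C_w^{-1}\bigl((tI-B_w)\tilde H(w,t)-A_w\tilde H(w-1,t)\bigr)_{\rr,\mathbf s}$, coming both from off-diagonal entries of $C_w^{-1}$ (whenever $q_i<r_i$ but $q_i\ge s_i$) and from off-diagonal entries of $B_w$ (whenever $\mathbf q'=\mathbf q+\ee_j$ with $q_j<s_j$); ruling out cancellation requires explicit control of the leading coefficients, which is essentially what you are trying to prove. Second, your ``cleaner route'' for $\tilde P_w$ asserts $\tilde P_0(t)=I$, but by definition $\tilde P_0=\tilde H(0,t)\Psi(t)^{-1}$ with $\Psi(t)=\tilde H(0,t)^T$, so $\tilde P_0=I$ would require $\tilde H(0,t)$ to be symmetric; even the weaker assertion $\deg\tilde P_0=0$ amounts to $\tilde H(0,t)=C\,\tilde H(0,t)^T$ for some constant matrix $C$, which is itself a nontrivial structural fact about the $w=0$ spherical functions. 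Both of these, together with the polynomiality of $\tilde H(0,t)$ that you already concede must be imported, are precisely what \cite{PT3} supplies. Your induction therefore recovers only the easy increments in $w$, while the substantive content remains in the cited reference.
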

It is important to point out that  $\{\tilde P_w\}_{w\ge0}$ is a
sequence of matrix orthogonal polynomials  with respect to a matrix
weight function $W=W(t)$ supported in the interval $(0,1)$ and given
in \cite{PT3}. From \eqref{3term} it easily follows that $\{\tilde
P_w\}_{w\ge0}$ satisfies the following three term recursion relation
\begin{equation}\label{4term}
t \tilde P_w(t)= A_w\tilde P_{w-1}(t)+ B_w\tilde P_w(t)+ C_w\tilde P_{w+1}(t).
\end{equation}
The above three term recursion relation  which hold for all $w\ge0$
can be written in the following way
\begin{equation}\label{matrix}
t\begin{vmatrix}\tilde P_0\\\tilde P_1\\\tilde P_2\\\tilde P_3\\\cdot\end{vmatrix}
=\begin{vmatrix} B_0&C_0&0&\\A_1&B_1&C_1&0&\\0&A_2&B_2&C_2&0&\\ &0&A_3&B_3&C_3&0\\&&\cdot&\cdot&\cdot&\cdot&\cdot\end{vmatrix}
\begin{vmatrix}\tilde P_0\\\tilde P_1\\\tilde P_2\\\tilde P_3\\\cdot\end{vmatrix}.
\end{equation}

Now we observe that the semi-infinite matrix $M$ on the right hand
side is a stochastic matrix, i.e. all the entries are nonnegative
and the sum of the elements in any row is one. In fact, the elements
in the $\rr$ row of the $w$ blocks are either zero or
$(A_w)_{\rr,\mathbf s}$, $(B_w)_{\rr,\mathbf s}$,
$(C_w)_{\rr,\mathbf s}$ which are given in \eqref{ABC}. Their sum is
\begin{align*}
\sum_{\mathbf s\in\Omega}(A_w)_{\rr,\mathbf s}&+(B_w)_{\rr,\mathbf
s}+(C_w)_{\rr,\mathbf s}=a^2_{n+1}(\mm)b^2_{1}(\mm+\ee_{n+1})\displaybreak[0]\\
&+\sum_{j=2}^n a^2_{j}(\mm)b^2_{1}(\mm+\ee_{j})
+\sum_{j=1}^{n+1}a^2_j(\mm)b^2_j(\mm+\ee_j)\displaybreak[0]\\
&+\sum_{j=2}^n
       a^2_{j}(\mm)b^2_{n+1}(\mm+\ee_{j})
       +a^2_{n+1}(\mm)\sum_{j=2}^n
       b^2_{j}(\mm+\ee_{n+1})\displaybreak[0]\\
       &+\sum_{2\le i\ne j\le
       n}a^2_{j}(\mm)b^2_{i}(\mm+\ee_{j})
       +a^2_{1}(\mm)b^2_{n+1}(\mm+\ee_{1})\displaybreak[0]\\
       &+a^2_1(\mm)\sum_{j=2}^n b^2_{j}(\mm+\ee_1),
       \end{align*}
where we replaced $\mm(w,\rr)$ by $\mm$. The right hand side can be
rewritten to obtain
\begin{align*}
\sum_{\mathbf s\in\Omega}(A_w)_{\rr,\mathbf s}&+(B_w)_{\rr,\mathbf
s}+(C_w)_{\rr,\mathbf
s}=a^2_{n+1}(\mm)\sum_{j=1}^{n+1}b^2_{j}(\mm+\ee_{n+1})\\
&+\sum_{j=2}^n
a^2_{j}(\mm)\sum_{i=1}^{n+1}b^2_{i}(\mm+\ee_{j})
+a^2_{1}(\mm)\sum_{j=1}^{n+1}b^2_{n+1}(\mm+\ee_{1})\displaybreak[0]\\
&\hskip3.3cm=\sum_{j=1}^{n+1}a^2_{j}(\mm)\sum_{i=1}^{n+1}b^2_{i}(\mm+\ee_{j}).
\end{align*}
Now by using \eqref{sumauno} the assertion
$$\sum_{\mathbf s\in\Omega}(A_w)_{\rr,\mathbf s}+(B_w)_{\rr,\mathbf s}+(C_w)_{\rr,\mathbf s}=1$$
follows, proving that the semi-infinite matrix $M$ is stochastic.

\section{The substeps of the random walk}\label{factorizacion}

In what follows we will construct a factorization of the stochastic
matrix $M$ appearing in \eqref{matrix} into the product of two
stochastic matrices of the form

\begin{equation}\label{factorization}
M=\begin{vmatrix} Y_0&X_0&0&\\0&Y_1&X_1&0&\\&0&Y_2&X_2&0&\\
&&0&Y_3&X_3&0\\&&&\cdot&\cdot&\cdot&\cdot\end{vmatrix}
\begin{vmatrix} S_0&0&&\\R_1&S_1&0&&\\0&R_2&S_2&0&&\\
&0&R_3&S_3&0&\\&&\cdot&\cdot&\cdot&\cdot\end{vmatrix}.
\end{equation}

While the random process given by the matrix $M$ leaves invariant the set $P$ introduced below, see (28),
this is not true for its substeps going along with this factorization. This section deals with this
complication in great detail.

The multiplication formulas given in Theorem
\ref{multiplicationformula} restricted to $g=a(\theta)$ give
\begin{equation}\label{multi0}
\begin{split}
\cos(\theta)\Phi^{\mm,\kk}_{\mathbf
s}(a(\theta))&=\sum_{j=1}^{n+1}a^2_j(\mm,\kk)\Phi^{\mm+\ee_j,\kk}_{\mathbf
s}(a(\theta)),\\
\cos(\theta)\Phi^{\mm,\kk}_{\mathbf
s}(a(\theta))&=\sum_{j=1}^{n+1}b^2_j(\mm,\kk)\Phi^{\mm-\ee_j,\kk}_{\mathbf
s}(a(\theta)).
\end{split}
\end{equation}
We recall that we fixed $\kk$ with $k_n\ge0$ and we took $\mm=\mm(w,\rr)$ as in
\eqref{n+1tupla}. Also making the change of variables
$t=\cos(\theta)$ we defined $F_{\rr,\mathbf
s}(w,t)=\Phi^{\mm(w,\rr),\kk}_{\mathbf s}(a(\theta))$. Now we make
the following important observation
\begin{equation}\label{importante}
\begin{split}
     \mm(w,\rr)\pm\ee_j& =\begin{cases}
       \mm(w\pm1,\rr)\pm\ee_{n+1} & \quad\text{ if } j=1,\\
       \mm(w,\rr\pm\ee_{j-1})\pm\ee_{n+1}  & \quad \text{ if } j=2,\dots,n,\\
       \mm(w,\rr)\pm\ee_{n+1} & \quad\text{ if } j=n+1.
     \end{cases}
  \end{split}
  \end{equation}
Introduce the following scalar functions
$$F^+_{\rr,\mathbf
s}(w,t)=\Phi^{\mm(w,\rr)+\ee_{n+1},\kk}_{\mathbf s}(a(\theta)),$$
and the matrix function
$$F^+(w,t)=(F^+_{\rr,\mathbf s}(w,t))_{(\rr,\mathbf
s)\in\Omega\times\Omega}.$$ Then the first identity in
\eqref{multi0} becomes
\begin{equation}\label{multi1}
\begin{split}
\sqrt tF_{\rr,\mathbf s}(w,t)&=a_1^2(\mm(w,\rr))F^+_{\rr,\mathbf s}(w+1,t)\\
+\sum_{j=1}^{n-1}&a^2_{j+1}(\mm(w,\rr))F^+_{\rr+\ee_j,\mathbf
s}(w,t)+a_{n+1}^2(\mm(w,\rr))F^+_{\rr,\mathbf s}(w,t).
\end{split}
\end{equation}

For each $w\ge0$ we define the following matrix
of type $\Omega\times\Omega$
\begin{equation}\label{XY}
X_w=((X_w)_{\rr,\mathbf s}), \quad Y_w=((Y_w)_{\rr,\mathbf s}),
\end{equation}
where
\begin{align*}
     (X_w)_{\rr,\mathbf s} & =\begin{cases}
       a^2_{1}(\mm(w,\rr))& \quad\text{ if } \mathbf s=\rr,\\
       0 & \quad  \text{ otherwise},
     \end{cases} \displaybreak[0]
\\
     (Y_w)_{\rr,\mathbf s} & =\begin{cases}
       a^2_{n+1}(\mm(w,\rr)) & \quad\text{ if } \mathbf s=\rr,\\
       a^2_{j+1}(\mm(w,\rr)) & \quad \text{ if } \mathbf s=\rr+\ee_j,\\
       0 & \quad  \text{ otherwise}.
     \end{cases} \displaybreak[0]
  \end{align*}

Now the set of scalar identities \eqref{multi1} with $(\rr,{\bf
s})\in \Omega\times\Omega$ can be written as a matrix identity in
the following more convenient way
\begin{equation}\label{multi2}
\sqrt tF(w,t)=X_wF^+(w+1,t)+Y_wF^+(w,t).
\end{equation}

For each $w\ge0$ we define the following matrix
of type $\Omega\times\Omega$
\begin{equation}\label{RS}
R_w=((R_w)_{\rr,\mathbf s}), \quad S_w=((S_w)_{\rr,\mathbf s}),
\end{equation}
where
\begin{align*}
     (R_w)_{\rr,\mathbf s} & =\begin{cases}
       b^2_{1}(\mm(w,\rr)+\mathbf e_{n+1})& \quad\text{ if } \mathbf s=\rr,\\
       0 & \quad  \text{ otherwise},
     \end{cases} \displaybreak[0]
\\
     (S_w)_{\rr,\mathbf s} & =\begin{cases}
       b^2_{n+1}(\mm(w,\rr)+\mathbf e_{n+1}) & \quad\text{ if } \mathbf s=\rr,\\
       b^2_{j+1}(\mm(w,\rr)+\mathbf e_{n+1}) & \quad \text{ if } \mathbf s=\rr-\ee_j,\\
       0 & \quad  \text{ otherwise}.
     \end{cases} \displaybreak[0]
  \end{align*}

\noindent If we multiply \eqref{multi2} by $\sqrt t$ and use the
second multiplication formula given in \eqref{multi0} we obtain
\begin{equation}\label{multi5}
\begin{split}
tF(w,t)=&X_w(R_{w+1}F(w,t)+S_{w+1}F(w+1,t))\\
&+Y_w(R_wF(w-1,t)+S_wF(w,t))\\
=&(X_wR_{w+1}+Y_wS_w)F(w,t)+X_wS_{w+1}F(w+1,t)\\
&+Y_wR_wF(w-1,t),
\end{split}
\end{equation}
since we claim that
\begin{equation}\label{multiplicationformula10}
\sqrt tF^+(w,t)=R_wF(w-1,t)+S_wF(w,t).
\end{equation}
Indeed we have
\begin{equation*}
\begin{split}
\sqrt tF^+_{\mathbf r,\mathbf s}(w,t)=&\sqrt
t\Phi^{\mm(w,\rr)+\mathbf
e_{n+1},\kk}_{\mathbf s}(a(\theta))\\
=&\sum_{j=1}^{n+1}b_j^2(\mm(w,\rr)+\mathbf e_{n+1})
\Phi^{\mm(w,\rr)+\mathbf e_{n+1}-\mathbf e_j,\kk}_{\mathbf
s}(a(\theta))\\
=&b_1^2(\mm(w,\rr)+\mathbf e_{n+1}) \Phi^{\mm(w-1,\rr),\kk}_{\mathbf
s}(a(\theta))\\
&+\sum_{j=2}^{n}b_j^2(\mm(w,\rr)+\mathbf e_{n+1})
\Phi^{\mm(w,\rr-\mathbf e_{j-1},\kk}_{\mathbf s}(a(\theta))\\
&+b_{n+1}^2(\mm(w,\rr)+\mathbf e_{n+1})
\Phi^{\mm(w,\rr),\kk}_{\mathbf s}(a(\theta)),
\end{split}
\end{equation*}
where we used \eqref{importante}.

On the other hand
\begin{equation*}
\begin{split}
(R_wF(w-1,t))_{\mathbf r,\mathbf s}=&\sum_{q\in\Omega}(R_w)_{\mathbf
r,\mathbf q}F_{\mathbf q,\mathbf s}(w-1,t)\\
=&b_1^2(\mm(w,\rr)+\mathbf e_{n+1})F_{\mathbf r,\mathbf s}(w-1,t),
\end{split}
\end{equation*}
and
\begin{equation*}
\begin{split}
(S_wF(w,t))_{\mathbf r,\mathbf s}=&\sum_{q\in\Omega}(S_w)_{\mathbf
r,\mathbf q}F_{\mathbf q,\mathbf s}(w,t)\\
=&b_{n+1}^2(\mm(w,\rr)+\mathbf e_{n+1})F_{\mathbf r,\mathbf
s}(w,t)\\
&+\sum_{j=1}^{n-1}b_{j+1}^2(\mm(w,\rr)+\mathbf e_{n+1})F_{\mathbf
r-\mathbf e_j,\mathbf s}(w,t).
\end{split}
\end{equation*}
Then \eqref{multiplicationformula10} follows easily.

Finally if we compare \eqref{multi5} with \eqref{3term} in Theorem
\ref{multiplicationformula1} we obtain
$$A_w=Y_wR_w,\quad B_w=X_wR_{w+1}+Y_wS_w,\quad C_w=X_wS_{w+1}$$
which is equivalent to the factorization \eqref{factorization}.

We end by checking that both matrices in the right hand side of
\eqref{factorization} are stochastic:
\begin{align*}
\sum_{\mathbf s\in\Omega}& (Y_w)_{\rr,\mathbf s}+\sum_{\mathbf
s\in\Omega}(X_w)_{\rr,\mathbf s}\\ &\qquad =a_{n+1}^2(\mm(w,\rr))
+\sum_{1\leq j\leq n-1}a_{j+1}^2(\mm(w,\rr))+a_1^2(\mm(w,\rr))=1,\displaybreak[0]\\
\sum_{\mathbf s\in\Omega}&(R_w)_{\rr,\mathbf s}+\sum_{\mathbf
s\in\Omega}(S_w)_{\rr,\mathbf s}\\ &\qquad
=b_{1}^2(\mm(w,\rr)+\mathbf e_{n+1}) +b_{n+1}^2(\mm(w,\rr)+\mathbf
e_{n+1})\\
&\hskip1.2cm+\sum_{1\leq j\leq n-1}b_{j+1}^2(\mm(w,\rr)+\mathbf
e_{n+1})=1,
\end{align*}
where we used that
$\sum_{i=1}^{n+1}a_i^2(\mm,\kk)=\sum_{i=1}^{n+1}b_i^2(\mm,\kk)=1$,
see \eqref{sumauno}.

\

Now we want to consider the random walks associated to the
probability matrices appearing in \eqref{factorization},
$$M=\begin{vmatrix} B_0&C_0&0&\\A_1&B_1&C_1&0&\\0&A_2&B_2&C_2&0&\\
&\cdot&\cdot&\cdot&\cdot&\cdot&\end{vmatrix}=M_1M_2,$$
\begin{equation}\label{M1}
M_1=\begin{vmatrix} Y_0&X_0&0&\\0&Y_1&X_1&0&\\&0&Y_2&X_2&0&\\
&&\cdot&\cdot&\cdot&\cdot\end{vmatrix},
\quad M_2=\begin{vmatrix} S_0&0&&\\R_1&S_1&0&&\\0&R_2&S_2&0&&\\
&\cdot&\cdot&\cdot&\cdot\end{vmatrix}.
\end{equation}

Let $F_w$ and $F^+_w$ denote, respectively, the polynomial functions
$F_w=F_w(t)$ and $F^+_w=F^+_w(t)$. Then
\eqref{multiplicationformula10} can be written as follows
\begin{equation}\label{W2}
\sqrt t\begin{vmatrix} F^+_0\\ F^+_1\\ F^+_2\\\cdot\end{vmatrix}
=\begin{vmatrix} S_0&0&\\R_1&S_1&0&\\0&R_2&S_2&0&\\
&\cdot&\cdot&\cdot&\cdot&\end{vmatrix}
\begin{vmatrix} F_0\\ F_1\\ F_2\\\cdot\end{vmatrix}.
\end{equation}
Similarly \eqref{multi2} gives
\begin{equation}\label{W1}
\sqrt t\begin{vmatrix} F_0\\ F_1\\ F_2\\\cdot\end{vmatrix}
=\begin{vmatrix} Y_0&X_0&0&\\0&Y_1&X_1&0&\\&0&Y_2&X_2&0&\\
&&\cdot&\cdot&\cdot&\cdot&\end{vmatrix}
\begin{vmatrix} F^+_0\\ F^+_1\\ F^+_2\\\cdot\end{vmatrix}.
\end{equation}
We can now rewrite \eqref{multi5} in matrix form,
\begin{equation}\label{W3}
t\begin{vmatrix} F_0\\ F_1\\ F_2\\\cdot\end{vmatrix}=
\sqrt t M_1\begin{vmatrix} F^+_0\\ F^+_1\\ F^+_2\\
\cdot\end{vmatrix}=M_1M_2\begin{vmatrix} F_0\\ F_1\\ F_2\\
\cdot\end{vmatrix}=M\begin{vmatrix} F_0\\ F_1\\ F_2\\\cdot\end{vmatrix}.
\end{equation}

The state space of the random walks $W,W_1,W_2$ associated,
respectively, to the stochastic matrices $M,M_1,M_2$ is the set
$\NN_{\ge0}\times\Omega$, and $W$ is equal to the composition
$W_1\circ W_2$.

We recall that the map $(w,\rr)\mapsto \mm(w,\rr)$ defined in
\eqref{n+1tupla} is an injection of $\NN_{\ge0}\times\Omega$ into
the $\kk$-spherical dual $\hat\U(n+1)(\kk)$ of $\U(n+1)$, and its
image is
\begin{equation}\label{P}
P=\{\mm\in\hat\U(n+1)(\kk):s_\mm=s_\kk\},
\end{equation}
where $s_\mm=m_1+\cdots+m_{n+1}, s_\kk=k_1+\cdots+ k_n$.

Let us now consider the random walk $W_1$ associated to the
stochastic matrix $M_1$. Below we display the entries of $M_1$ at
the different sites of its $(w,\mathbf r)$-row,
\begin{equation*}
 \begin{cases}
       a^2_{n+1}(\mm(w,\rr)) & \quad\text{ if  $\mm(w,\mathbf s)$-site}=\mm(w,\rr),\\
       a^2_{j+1}(\mm(w,\rr)) & \quad \text{ if  $\mm(w, \mathbf s)$-site}=\mm(w,\rr+\ee_j),\\
       a^2_1(\mm(w,\rr)) & \quad \text{ if  $\mm(w, \mathbf s)$-site}=\mm(w+1,\rr),\\
       0 & \quad  \text{ in  other sites}.
     \end{cases} \displaybreak[0]
\end{equation*}
The appearance of the plus sign in the right hand side of \eqref{W1}
makes it natural to consider instead the random walk $W_1^+$ obtained from
$W_1$ by applying a shift by $\ee_{n+1}$. Thus, if the system is at
state $\mm(w,r)$ at time $t$, then at time $t+1$ it can move in the
following ways
\begin{equation*}
W_1^+: \begin{cases}
       \mm(w,\rr) \rightarrow  \mm(w,\rr) + \ee_{n+1},&\text{ with probability } \; a^2_{n+1}(\mm(w,\rr)),\\
       \mm(w,\rr) \rightarrow \mm(w,\rr) + \ee_{j+1},&\text{ with probability } \; a^2_{j+1}(\mm(w,\rr)),\\
       \mm(w,\rr) \rightarrow \mm(w, \rr) +\ee_1,& \text{ with probability } \; a^2_1(\mm(w,\rr)),\\
       \mm(w,\rr) \rightarrow  \text{other states},&\text{ with probability } \; 0,
       \end{cases} \displaybreak[0]
\end{equation*}
because $\mm(w,\rr+\ee_j) +\ee_{n+1}= \mm(w,\rr) + \ee_{j+1}$ for
$1\le j\le n-1$, and $\mm(w+1,\rr) +\ee_{n+1}=\mm(w,\rr) +\ee_1$.
This is in accordance with the following formula derived by looking
at the $((w,\rr),\mathbf s)$-entry of  \eqref{W1},
$$\cos(\theta) \Phi_{\mathbf s}^{\mm(w,\rr),\kk}(a(\theta))=\sum_{j=1}^{n+1}a^2_j(\mm(w,\rr))\Phi_{\mathbf s}^{\mm(w,\rr)+\ee_j,\kk}(a(\theta)).$$

Now it is worth to observe that $W_1^+$ does not leave invariant the
subset $P$ but extends to a random walk $ \tilde W_1$ in
$\hat\U(n+1)(\kk)$ defined by
\begin{equation}\label{W1plus}
\tilde W_1: \mm\rightarrow \mm+\ee_j, \text{ with probability } \;
a^2_j(\mm,\kk).
\end{equation}

We proceed similarly with the random walk $W_2$ associated to the
stochastic matrix $M_2$. Below we display the entries of $M_2$ at
the different sites of its $(w,\mathbf r)$-row,
\begin{equation*}
 \begin{cases}
       b^2_{n+1}(\mm(w,\rr)+\ee_{n+1}) & \quad\text{ if  $\mm(w,\mathbf s)$-site}=\mm(w,\rr),\\
       b^2_{j+1}(\mm(w,\rr)+\ee_{n+1}) & \quad \text{ if  $\mm(w, \mathbf s)$-site}=\mm(w,\rr-\ee_j),\\
       b^2_1(\mm(w,\rr)+\ee_{n+1}) & \quad \text{ if  $\mm(w, \mathbf s)$-site}=\mm(w-1,\rr),\\
       0 & \quad  \text{ in  other sites}.
     \end{cases} \displaybreak[0]
\end{equation*}
The appearance of the plus sign in the left hand side of \eqref{W2}
makes it natural to consider instead the random walk $W_2^-$ obtained from
$W_2$ by applying a shift by $-\ee_{n+1}$. Thus, if the system is at
state $\mm(w,r)$ at time $t$, then at time $t+1$ it can move in the
following ways
\begin{equation*}
W_2^-: \begin{cases}
       \mm(w,\rr) \rightarrow  \mm(w,\rr) - \ee_{n+1},&\text{ with prob. } \; b^2_{n+1}(\mm(w,\rr)+\ee_{n+1}),\\
       \mm(w,\rr) \rightarrow \mm(w,\rr) - \ee_{j+1},&\text{ with prob. } \; b^2_{j+1}(\mm(w,\rr)+\ee_{n+1}),\\
       \mm(w,\rr) \rightarrow \mm(w, \rr) - \ee_1,& \text{ with prob. } \; b^2_1(\mm(w,\rr)+\ee_{n+1}),\\
       \mm(w,\rr) \rightarrow  \text{other states},&\text{ with prob. } \; 0,
       \end{cases} \displaybreak[0]
\end{equation*}
because $\mm(w,\rr-\ee_j) - \ee_{n+1}= \mm(w,\rr) - \ee_{j+1}$ for
$1\le j\le n-1$, and $\mm(w-1,\rr) - \ee_{n+1}=\mm(w,\rr) - \ee_1$.
This is in accordance with the following formula derived by looking
at the $((w,\rr),\mathbf s)$-entry of  \eqref{W2},
$$\cos(\theta) \Phi_{\mathbf s}^{\mm(w,\rr)+\ee_{n+1},\kk}(a(\theta))=\sum_{j=1}^{n+1}b^2_j(\mm(w,\rr)+\ee_{n+1})
\Phi_{\mathbf s}^{\mm(w,\rr)+\ee_{n+1}-\ee_j,\kk}(a(\theta)).$$

Then  $W_2^-$ does not leave invariant the subset $P$ but extends to
a random walk $\tilde W_2$ in $\hat\U(n+1)(\kk)$ defined by
\begin{equation}\label{W2minus}
\tilde W_2: \mm\rightarrow \mm-\ee_j, \text{ with probability } \;
b^2_j(\mm+\ee_{n+1},\kk),
\end{equation}
for $1\le j\le n+1$.

The transition matrices of $\tilde W_1$ and $\tilde W_2$ are, respectively,
the following block bidiagonal matrices
\begin{equation}\label{tildeM1}
\tilde M_1=\begin{vmatrix} \tilde Y_0&\tilde X_0&0&\\0&\tilde Y_1&\tilde X_1&0&\\&0&\tilde Y_2&\tilde X_2&0&\\
&&\cdot&\cdot&\cdot&\cdot\end{vmatrix},
\quad \tilde M_2=\begin{vmatrix} \tilde S_0&0&&\\\tilde R_1&\tilde S_1&0&&\\0&\tilde R_2&\tilde S_2&0&&\\
&\cdot&\cdot&\cdot&\cdot\end{vmatrix},
\end{equation}
with
\begin{align*}
     (\tilde X_w)_{\mm,\nn} & =\begin{cases}
       a^2_{1}(\mm)& \quad\text{ if } \nn=\mm,\\
       0 & \quad  \text{ otherwise},
     \end{cases} \displaybreak[0]
\\
     (\tilde Y_w)_{\mm,\nn} & =\begin{cases}
       a^2_{n+1}(\mm) & \quad\text{ if } \nn=\mm,\\
       a^2_{j+1}(\mm) & \quad \text{ if } \nn=\mm+\ee_j,\\
       0 & \quad  \text{ otherwise},
     \end{cases} \displaybreak[0]
  \end{align*}
\begin{align*}
     (\tilde R_w)_{\mm,\nn} & =\begin{cases}
       b^2_{1}(\mm+\mathbf e_{n+1})& \quad\text{ if } \nn=\mm,\\
       0 & \quad  \text{ otherwise},
     \end{cases} \displaybreak[0]
\\
     (\tilde S_w)_{\mm,\nn} & =\begin{cases}
       b^2_{n+1}(\mm+\mathbf e_{n+1}) & \quad\text{ if } \nn=\mm,\\
       b^2_{j+1}(\mm+\mathbf e_{n+1}) & \quad \text{ if } \nn=\rr-\ee_j,\\
       0 & \quad  \text{ otherwise}.
     \end{cases} \displaybreak[0]
  \end{align*}
where $\mm,\nn\in\hat\U(n+1)(\kk)$ are such that
$w=m_1-k_1=n_1-k_1$, and $1\le j\le n-1$.

Moreover, the stochastic matrix $\tilde M$ corresponding to the
composition $\tilde W=\tilde W_1\circ\tilde W_2$ is equal to $\tilde
M_1\tilde M_2$, and it is given by
$$\tilde M=\begin{vmatrix} \tilde B_0&\tilde C_0&0&\\\tilde A_1&\tilde B_1&\tilde C_1&0&\\0&\tilde A_2&\tilde B_2&\tilde C_2&0&\\
&\cdot&\cdot&\cdot&\cdot&\cdot&\end{vmatrix},$$

with
\begin{align*}
     (\tilde A_w)_{\mm,\nn} & =\begin{cases}
       a^2_{n+1}(\mm)b^2_{1}(\mm+\ee_{n+1}) & \quad\text{ if } \nn=\mm\\
       a^2_{j+1}(\mm)b^2_{1}(\mm+\ee_{j+1})  & \quad \text{ if } \nn=\mm+\ee_j\\
       0 & \quad  \text{ otherwise}
     \end{cases} \displaybreak[0]
\\
     (\tilde C_w)_{\mm,\nn} & =\begin{cases}
       a^2_{1}(\mm)b^2_{n+1}(\mm+\ee_{1})) & \quad\text{ if } \nn=\mm\\
       a^2_1(\mm)b^2_{j+1}(\mm+\ee_1)  & \quad \text{ if } \nn=\mm-\ee_j\\
       0 & \quad  \text{ otherwise}
     \end{cases} \displaybreak[0]
\\
     (\tilde B_w)_{\mm,\nn} & =\begin{cases}
      \displaystyle \sum_{1\leq j\leq n+1}a^2_j(\mm)b^2_j(\mm+\ee_j)) & \quad\text{ if } \nn=\mm\\
       a^2_{j+1}(\mm)b^2_{n+1}(\mm+\ee_{j+1})  & \quad \text{ if } \nn=\mm+\ee_j\\
       a^2_{n+1}(\mm)b^2_{j+1}(\mm+\ee_{n+1}) & \quad\text{ if } \nn=\mm-\ee_j\\
       a^2_{j+1}(\mm)b^2_{i+1}(\mm+\ee_{j+1}) & \quad\text{ if } \nn=\mm+\ee_j-\ee_i \\
       0 & \quad  \text{ otherwise}
     \end{cases}
  \end{align*}
where $\mm,\nn\in\hat\U(n+1)(\kk)$ are such that
$w=m_1-k_1=n_1-k_1$, and $1\le i, j\le n-1$. The coefficients $a^2_i(\mm), b^2_i(\mm)$
for $1\le i\le n+1$ are those defined in  \eqref {coefficientsab}.

  If we identify $\NN_{\ge0}\times\Omega$ with the subset $P$, defined in \eqref{P},
  by $(w,\rr)\equiv \mm(w,\rr)$, then clearly
  $W=\tilde W_{|P}$, because $M$ become a submatrix of $\tilde M$. Therefore
  $$W_1\circ W_2=W=\tilde W_{|P}=(\tilde W_1\circ\tilde W_2)_{|P}.$$

To conclude, the analysis of the random walk $W$ associated to the stochastic matrix $M$
is simplified  by looking at the decomposition $W= (\tilde W_1\circ\tilde W_2)_{|P}$
instead of considering $W=W_1\circ W_2$.

\section{An urn model for $\U(3)$}\label{urnmodelU3}

We now give a concrete probabilistic mechanism that goes along with
the random walk $\tilde W_1$ constructed in Section
\ref{factorizacion} by group theoretical means, see \eqref{W1plus}.
An entirely similar construction going with $\tilde W_2$ can be
considered for the other substep of our process.

This section is included for the benefit of the reader. It describes
in detail, for the simple case of $n=2$ going along with the pair
$(\U(3),\U(2))$, a construction that will be given in general in
Section \ref {urnmodelUn}.

A configuration, or state of our system, is now a triple of integers
$\mm=(m_1,m_2,m_3)$ subject to the constrains $m_1\ge k_1\ge m_2\ge
k_2\ge  m_3$ with two fixed integers $k_1\ge k_2$, see
\eqref{constrains}. We describe a stochastic mechanism whereby  one
of the three values of the $m_i$ is incresased by one with the
following probabilities, see \eqref{coefficientsab}
\begin{equation*}
\begin{split}
a_1^2(\mm,\kk)&=\frac{(m_1-k_1+1)(m_1-k_2+2)}{(m_1-m_2+1)(m_1-m_3+2)},\\
a_2^2(\mm,\kk)&=\frac{(k_1-m_2)(m_2-k_2+1)}{(m_1-m_2+1)(m_2-m_3+1)},\\
a_3^2(\mm,\kk)&=\frac{(k_1-m_3+1)(k_2-m_3)}{(m_1-m_3+2)(m_2-m_3+1)}.
\end{split}
\end{equation*}

In the general scheme to be considered later
this case corresponds to the value $n=2$, and thus we start with
two urns $B_1,B_2$. In urn $B_j$, $j=1,2$, place $m_j-k_j+1$ balls of color
$c_j$ and $k_j-m_{j+1}$ balls of color $d_j$. These four colors are all
different. Notice that we could have no balls of colors $d_1$ or $d_2$
and that the total number of balls in urn $B_j$ is $m_j-m_{j+1}+1.$

It will be useful to consider the following ordered set of urns $$B_1,B_2, B_1\cup B_2.$$
In view of the notation to be introduced in the general case we denote these urns as
$$B_{1,1},B_{2,2},B_{1,2}.$$

We will introduce later on an order among certain collections of urns that will yield, in this
particular case,
$$B_{1,1}<B_{2,2}<B_{1,2}.$$

Now perform a total of three consecutive experiments. Each experiment consists of drawing one
ball at random (i.e. with the uniform distribution) from an urn in the ordered set of urns above,
record the outcome as a letter in a word, and continue to the next experiment making sure to return
the ball that has been drawn to its original urn after this experiment has been performed.

The first experiment consists of picking one ball from urn $B_{1,1}=B_1.$
This can give a ball of color $c_1$ or $d_1.$ Record the outcome $c_1$ or $d_1$
as the first letter in a word of three letters, and return the ball to its original urn, $B_{1,1}$.

The second experiment consists of picking one ball from urn $B_{2,2}=B_2.$ This can
result in a ball of color either $c_2$ or $d_2.$ Record the result as the second letter
in a  word that will have a total of three
letters (the colors of the balls chosen in experiments 1,2,3), and return the ball to its original urn, $B_{2,2}$.

The last experiment consists of picking one ball from the union of the urns $B_{1,1}$ and $B_{2,2}$, i.e urn $B_{1,2}.$
The color of the ball in question i.e. $c_1,d_1,c_2$ or $d_2$ is the last letter in our word.
This last ball drawn from $B_{1,2}=B_1\cup B_2$ is then returned to the urn $B_1$ or $B_2$ where it came from.

There is a total of sixteen ($=2 \times 2 \times 4$) possible words that can arise in this fashion from an alphabet of four letters.
These words constitute the set of all possible outcomes of the experiment made up of these three succesive and properly ordered ones.

Since we return the chosen ball at the end of each one of these experiments to its original urn, we have
that the state of the system has not yet changed. This is about to happen now.

We need a rule to decide which of the three values $m_1,m_2,m_3$
will be increased by one unit as the result of our experiment. To
this end we break up the set of sixteen words into three disjoint
and exhaustive sets. These sets will be denoted by $S_{1,3}$,
$S_{2,3}$ and $S_{3,3},$ and the sample space $S_3$ of cardinality
$16$ is given by
$$S_3=\bigcup_{j=1}^{3} S_{j,3}.$$

Each set $S_{j,3}$ consisting of words with three letters will be
obtained by a ``growth process'' starting from the sets we would
have if we had considered the previous case, namely $n=1$, when we
have only one box and we were dealing with $\U(2)$. In that case the
sets are made up of words of one letter, either ${c_1}$ or ${d_1}.$
To make the connection with the general case we will denote these
sets in the case of one urn by $S_{1,2}$ and $S_{2,2},$ and the
sample space by $S_2=S_{1,2}\cup S_{2,2}.$ Explicitly
$S_{1,2}=\{c_1\}$, $S_{2,2}=\{d_1\}$.

\smallskip

Let us come back to the case $n=2$. The class $S_{1,3}$ is formed by
including all three letter words that start as those of $S_{1,2}$
and whose remaining two letters are such that the last one is not
$d_2$, i.e. either $c_1,d_1$ or $c_2.$ Thus
$$S_{1,3}=\{(c_1,c_2,c_1),(c_1,c_2,d_1),(c_1,c_2,c_2),(c_1,d_2,c_1),(c_1,d_2,d_1),(c_1,d_2,c_2)\}.$$

The class $S_{2,3}$ is formed by including all three letter words
that start as those of $S_{2,2}$ and whose remaining two letters are
such that the first one is not $d_2$. Explicitly $S_{2,3}$ is
$$S_{2,3}=\{(d_1,c_2,c_1),(d_1,c_2,d_1),(d_1,c_2,c_2),(d_1,c_2,d_2)\}.$$

Notice that the meaning of the requirement "not $d_2$" is quite
different when it applies to the second urn $B_{2,2}$ as above, or
to the third urn $B_{1,2}$ as in the previous case.

Finally $S_{3,3}$ is obtained by taking the union of all three
letter words that start as in $S_{1,2}$ and have $d_2$ as their last
letter, together with all words that start as in $S_{2,2}$ and have
$d_2$ as the second letter. Notice that $S_{3,3}$ is obtained by
going over all the classes already built, $S_{1,3}$ and $S_{2,3}$,
and replacing the condition not $d_2$ by $d_2.$ The class $S_{3,3}$
is thus made up of two sets of words, namely
\begin{equation*}
\begin{split}
S_{3,3}=&\{(c_1,c_2,d_2),(c_1,d_2,d_2)\}\\
&\cup\{(d_1,d_2,c_1), (d_1,d_2,d_1),(d_1,d_2,c_2),(d_1,d_2,d_2)\}.
\end{split}
\end{equation*}

It takes almost no effort to see that all these $6+4+6=16$ words have been classified into three disjoint and exhaustive classes.

\

Now we compute the total probability of getting a result that belongs to each class.
For the first class $S_{1,3}$ we have,
\begin{equation*}
\frac{(m_1-k_1+1)(m_1-k_2+2)}{(m_1-m_2+1)(m_1-m_3+2)}=a_1^2(\mm,\kk).
\end{equation*}

\noindent For the second class  $S_{2,3}$ we have that the probability is
$$ \frac{(k_1-m_2)(m_2-k_2+1)}{(m_1-m_2+1)(m_2-m_3+1)} =a_2^2(\mm,\kk).$$

\noindent  Finally the total probability of the third class  $S_{3,3}$ is,
\begin{equation*}
\begin{split}
& \frac{ (m_1-k_1+1)(k_2-m_3)}{(m_1-m_2+1)(m_1-m_3+2)}+\frac{(k_1-m_2)(k_2-m_3)}
{(m_1-m_2+1)(m_2-m_3+1)}\\
&=\frac{(k_2-m_2)(k_1-m_3+1)}{(m_1-m_3+2)(m_2-m_3+1)}=a_3^2(\mm,\kk).
\end{split}
\end{equation*}

We are ready to give a rule for changing the state of the system in one unit of
time. A result belonging to the subset $S_{j,3}$, $j=1,2,3$, will
lead to a transition to a new state $\mm+\ee_j$, where $m_j$ is
increased by one. In terms of balls this will be achieved by
removing from each urn containing a ball of color $d_{j-1}$ one of
these balls, and adding to each urn containing a ball of color $c_j$
one ball of this color from the bath. When $j=1$ we do no removal.

\section{An urn model for every $\U(n+1)$}\label{urnmodelUn}

In this section we describe a random mechanism that gives rise to a
Markov chain whose one-step transition matrix is
$$\begin{vmatrix} Y_0&X_0&0&\\0&Y_1&X_1&0&\\&0&Y_2&X_2&0&\\
&&0&Y_3&X_3&0\\&&&\cdot&\cdot&\cdot&\cdot\end{vmatrix},$$ appearing
in the factorization \eqref{factorization} and where the matrices
$X_i,Y_i$ are defined in \eqref{XY}.

A configuration is a set of  $n+1$ values of the integers $m_i$,
$1\le i\le n+1$, subject to the constrains $m_1\ge k_1\ge m_2\ge
\cdots\ge m_n\ge k_n\ge m_{n+1}$ where the integers $k_i$ remain
unchanged throughout time. We will construct a stochastic process
whereby in one unit of time one of the $m_j$ is increased by one
with probability given by
\begin{equation}\label{coefficientsab1}
a_j^2(\mm,\kk)=\left\vert\frac{\prod_{i=1}^{n}(k_i-m_j-i+j-1)}
{\prod_{i\ne j}(m_i-m_j-i+j)}\right\vert.
\end{equation}

Consider $n$ urns $B_1,\dots,B_n$. In urn $B_j$ place $m_j-k_j+1$
balls of color $c_j$ and $k_j-m_{j+1}$ balls of color $d_j$. We
assume that the colors $c_j, d_j$ are all different. Notice that in
urn $B_j$ may be no ball of color $d_j$, and that the total number
of balls in $B_j$ is $m_j-m_{j+1}+1$.

Consider the following ordered set of urns
\begin{equation*}
B_1,B_2,B_1\cup B_2,B_3,B_2\cup B_3,B_1\cup B_2\cup B_3,\dots,B_n,B_{n-1}\cup B_n,\dots,B_1\cup \cdots\cup B_n.
\end{equation*}
The union of urns is an urn whose content is the union of the set of balls in each urn in the union. Observe that the total number of urns under consideration is $n(n+1)/2$. Let
$$B_{k,j}=B_k\cup B_{k+1}\cup\cdots\cup B_j,\quad 1\le k\le j.$$
Clearly $B_{j,j}=B_j$, and the set of all urns
$$\{B_{k,j}: 1\le k\le j\le n\}$$
is ordered lexicographically according to: $(k,j)<(r,s)$ if $j<s$ or if $j=s$ and $r<k$.

We will perform a total of $n(n+1)/2$ consecutive experiments. Each
experiment consists of drawing one ball at random (i.e. with the
uniform distribution) from each urn in the ordered set of urns,
record the outcome as a letter in a word, and continue to the next
experiment making sure to return the ball to the original urn after
this experiment has been performed. One should think of a complete
experiment as consisting of these $n(n+1)/2$ individual experiments.
The transition from the present state of the system to the next one
takes place after the complete experiment is carried out.

The first experiment consists of picking one ball from urn $B_{1,1}$, this can give a ball of color $c_1$ or $d_1$. The result is recorded and the ball is put back in urn $B_{1,1}$. The second experiment consists of picking one ball from  urn $B_{2,2}$, this can result in either  a ball of color $c_2$ or $d_2$.  Record the result as the second letter in a word that will have a total of  $n(n+1)/2$ letters. Put the ball  back in urn $B_{2,2}$. Keep on going by taking successively at random a ball from an urn $B_{k,j}$  and adding the letter corresponding to its color to the right of the word obtained in the previous step. The process finishes once a ball of the last urn $B_{1,n}$ is picked and a final word of $n(n+1)/2$ letters is obtained.

The alphabet is the set $\{c_j,d_j:1\le j\le n\}$ of $2n$ letters. Then the sample space $S_{n+1}$
consists of all words $w$ of $n(n+1)/2$ letters that can be written with such an alphabet with
the restriction that the letters allowed in the place $(k,j)$ correspond to the color of
any ball in urn $B_{k,j}$. The cardinality of the sample space is
$$|S_{n+1}|=\prod_{1\le k\le j\le n}2(j-k+1).$$

 Now by induction on $n\ge 1$ we define a partition of $S_{n+1}$ into $n+1$ disjoint subsets
 $$S_{n+1}=\bigcup_{j=1}^{n+1}S_{j,n+1}.$$

 \bigskip

 For the benefit of the reader the construction will be spelled out in detail for small values
 of $n$ after we describe it in the general case and prove Proposition 5.2.

 \bigskip

 We start with $S_2=S_{1,2}\cup S_{2,2}$ where
 $$S_{1,2}=\{\D_1\}, \quad S_{2,2}=\{d_1\},\quad\D_1=c_1.$$
 Then
 $$|S_{1,2}|=|S_{2,2}|=1,\quad |S_2|=2.$$

 We make the following convention: the symbol $\D_j$ in the $(k,j)$-place of a
 word stands for any color of a ball in urn $B_{k,j}$ different from $d_j$, and the
 letter $x$  in the $(k,j)$-place of a word stands for any possible color of a ball in urn $B_{k,j}$.

If $n\ge2$ we set
$$S_{1,n+1}=\{w_{1,n+1}=w_{1,n}x\cdots x\D_n\in S_{n+1}:  w_{1,n}\in S_{1,n}\}.$$
 Observe that the number of letters in the word $w_{1,n+1}$ to the right of the word $w_{1,n}$ is  $n$.
 Similarly we define
 $$S_{2,n+1}=\{w_{2,n+1}=w_{2,n}x\cdots x\D_nx\in S_{n+1}:  w_{2,n}\in S_{2,n}\}.$$
More generally for $1\le j\le n$ we let
$$S_{j,n+1}=\{w_{j,n+1}=w_{j,n}x\cdots x\D_nx\cdots x\in S_{n+1}:  w_{j,n}\in S_{j,n}\}$$
where the number of $x$'s to the right of $\D_n$ is $j-1$.

The definition of $S_{n+1,n+1}$ is more interesting, namely
\begin{equation*}
\begin{split}
S_{n+1,n+1}=&\{w_{n+1,n+1}=w_{1,n}x\cdots xd_n\in S_{n+1}:  w_{1,n}\in S_{1,n}\}\\
&\cup \{w_{n+1,n+1}=w_{2,n}x\cdots xd_nx\in S_{n+1}:  w_{2,n}\in S_{2,n}\}\\
&\cup\cdots\cup\{w_{n+1,n+1}=w_{n,n}d_nx\cdots x\in S_{n+1}:  w_{n,n}\in S_{n,n}\}.
\end{split}
\end{equation*}

\begin{prop}
Let $n\ge2$. Then for $1\le j\le n$ we have
$$|S_{j,n+1}|=|S_{j,n}|(2(n-j)+1)\prod_{1\le k\le n,\; k\ne j}2(n-k+1),$$
$$|S_{n+1,n+1}|=\sum_{1\le j\le n}|S_{j,n}|\prod_{1\le k\le n,\; k\ne j}2(n-k+1).$$
\end{prop}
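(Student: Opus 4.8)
The plan is to prove both cardinality formulas directly from the recursive definitions of the sets $S_{j,n+1}$, carefully counting the number of ways each ``$x$'' and each $\D_n$ can be filled in.

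First I would fix $n\ge 2$ and $1\le j\le n$ and examine the definition
$$S_{j,n+1}=\{w_{j,n}\,x\cdots x\,\D_n\,x\cdots x\in S_{n+1}:\ w_{j,n}\in S_{j,n}\},$$
where the prefix $w_{j,n}$ occupies the places $(k,i)$ with $i\le n-1$, and the $n$ appended symbols occupy the places $(1,n),(2,n),\dots,(n,n)$ (in lexicographic order these are the urns $B_{n,n}<B_{n-1,n}<\cdots<B_{1,n}$). In the $j$-th appended slot sits the constrained symbol $\D_n$, and in the other $n-1$ appended slots sits an unconstrained $x$. The key observation is that the prefix and the appended block are filled independently, so $|S_{j,n+1}|$ is $|S_{j,n}|$ times the number of ways to legally fill the appended block. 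For an unconstrained slot corresponding to urn $B_{k,n}$, the number of admissible letters is the number of distinct colors of balls in $B_{k,n}=B_k\cup\cdots\cup B_n$, which is $2(n-k+1)$ (one $c$-color and one $d$-color from each of the $n-k+1$ constituent urns $B_k,\dots,B_n$, all distinct by hypothesis). For the $\D_n$-slot, which corresponds to the urn in position $j$ among $B_{n,n},\dots,B_{1,n}$ — that is, the urn $B_{n-j+1,n}$ — the admissible letters are all colors of balls in that urn \emph{except} $d_n$, of which there are $2(n-(n-j+1)+1)-1=2(n-j)+1$. Hence the appended block contributes $(2(n-j)+1)\prod_{1\le k\le n,\ k\ne j}2(n-k+1)$, and multiplying by $|S_{j,n}|$ gives the first formula. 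One should double-check the index bookkeeping: the ``$x$'' in the appended block at position $k$ (counting from the $\D_n$ convention used in the definition) must be matched with the correct urn $B_{n-k+1,n}$, and the product over $k\ne j$ is exactly over the $n-1$ unconstrained slots.

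Next, for $|S_{n+1,n+1}|$ I would run through the displayed union defining $S_{n+1,n+1}$. It is a disjoint union indexed by $j=1,\dots,n$, where the $j$-th piece consists of words whose prefix lies in $S_{j,n}$ and whose appended block of $n$ symbols has $d_n$ (not $\D_n$) in the $j$-th slot and $x$ in every other appended slot. The pieces are disjoint because they have prefixes in the disjoint sets $S_{j,n}$ (and because the single slot carrying $d_n$ differs). Counting the $j$-th piece exactly as above — prefix contributes $|S_{j,n}|$, the $d_n$-slot now contributes $1$ way, and the $n-1$ unconstrained slots contribute $\prod_{k\ne j}2(n-k+1)$ — gives $|S_{j,n}|\prod_{1\le k\le n,\ k\ne j}2(n-k+1)$, and summing over $j$ yields the second formula. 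As a consistency sanity check one can verify $\sum_{j=1}^{n}|S_{j,n+1}|+|S_{n+1,n+1}|$ telescopes to $|S_{n+1}|=\prod_{1\le k\le j\le n}2(j-k+1)$, using that $(2(n-j)+1)+1=2(n-j+1)$ collapses the $\D_n$ and $d_n$ contributions into the full count $2(n-j+1)$ for that slot.

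The main obstacle, such as it is, is purely notational: keeping straight the lexicographic order on the urns $B_{k,j}$, so that the $r$-th of the $n$ newly appended letters really does correspond to urn $B_{n-r+1,n}$, and confirming that the symbol $\D_n$ (resp.\ $d_n$) sits in the slot indexed by the same $j$ that indexes the prefix set $S_{j,n}$. Once that correspondence is pinned down, everything reduces to the elementary count ``number of distinct colors in $B_{k,n}$ equals $2(n-k+1)$'' together with the independence of the prefix and the appended block. No deep input is needed beyond the definitions in this section; in particular the formula $|S_{n+1}|=\prod_{1\le k\le j\le n}2(j-k+1)$ already recorded above can be used for the sanity check but is not logically required for the proof.
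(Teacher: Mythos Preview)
The paper states this proposition without proof, so there is no argument to compare against; your direct counting from the recursive definitions is exactly the intended justification and your conclusions are correct. There is, however, one concrete bookkeeping slip worth fixing. In the appended block of $n$ letters the slots correspond, in the paper's lexicographic order, to the urns $B_{n,n},B_{n-1,n},\dots,B_{1,n}$. Since in the definition of $S_{j,n+1}$ the symbol $\D_n$ has exactly $j-1$ letters to its right, it occupies the $(n{-}j{+}1)$-th appended slot, which is the urn $B_{j,n}$, not $B_{n-j+1,n}$ as you wrote. The urn $B_{j,n}$ carries $2(n-j+1)$ colors, so $\D_n$ admits $2(n-j+1)-1=2(n-j)+1$ choices --- the correct factor. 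Your displayed equality $2(n-(n-j+1)+1)-1=2(n-j)+1$ is in fact false (the left side equals $2j-1$), so two errors have cancelled. With the correct identification the unconstrained appended slots are precisely the urns $B_{k,n}$ with $k\ne j$, each contributing $2(n-k+1)$ choices, and the product $\prod_{k\ne j}2(n-k+1)$ in the statement appears directly without any reindexing. The count for $S_{n+1,n+1}$ then goes through verbatim once the same correction is made.
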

\begin{prop} $\{S_{j,n+1}:1\le j\le n+1\}$ is a partition of the
sample space $S_{n+1}$.
\end{prop}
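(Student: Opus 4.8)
The plan is to prove the statement by induction on $n$. The base case is $n=1$, where $S_2=S_{1,2}\cup S_{2,2}$ with $S_{1,2}=\{c_1\}$ and $S_{2,2}=\{d_1\}$; since $S_2=\{c_1,d_1\}$, these two singletons manifestly form a partition. For the inductive step I assume that $\{S_{j,n}:1\le j\le n\}$ is a partition of $S_n$ and must deduce the same for $\{S_{j,n+1}:1\le j\le n+1\}$.

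The first thing I would do is make explicit the structure behind the recursive construction. Every $w\in S_{n+1}$ splits uniquely into a \emph{prefix} $w'$ (the letters in the places $(k,j)$ with $j\le n-1$) followed by a \emph{last column} (the $n$ letters in the places $(n,n),(n-1,n),\dots,(1,n)$); since the alphabet admissible in a place $(k,j)$ is just the set of colors occurring in $B_{k,j}$ and so depends only on $(k,j)$, the prefix $w'$ ranges over all of $S_n$ while the last column ranges independently over all admissible fillings. Unwinding the definitions, for $1\le j\le n$ the set $S_{j,n+1}$ consists of the $w\in S_{n+1}$ with $w'\in S_{j,n}$ and with the letter of $w$ in place $(j,n)$ different from $d_n$ (the remaining last-column letters being unrestricted), whereas $S_{n+1,n+1}$ consists of the $w$ for which there exists $j\in\{1,\dots,n\}$ with $w'\in S_{j,n}$ and with the letter of $w$ in place $(j,n)$ equal to $d_n$.

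With that in hand the covering property is quick: given $w\in S_{n+1}$, by the inductive hypothesis its prefix lies in a unique $S_{j_0,n}$; if the letter of $w$ in place $(j_0,n)$ is not $d_n$ then $w\in S_{j_0,n+1}$, and otherwise $w\in S_{n+1,n+1}$. For disjointness I would argue in two cases. If $1\le a<b\le n$, an element of $S_{a,n+1}\cap S_{b,n+1}$ would have its prefix in both $S_{a,n}$ and $S_{b,n}$, contradicting the inductive hypothesis. If $1\le a\le n$, then for $w\in S_{a,n+1}\cap S_{n+1,n+1}$ the prefix $w'$ lies in $S_{a,n}$; membership in $S_{n+1,n+1}$ supplies an index $j$ with $w'\in S_{j,n}$ and the letter in place $(j,n)$ equal to $d_n$, and uniqueness of the block of the $S_n$-partition containing $w'$ forces $j=a$, so the letter of $w$ in place $(a,n)$ is simultaneously $d_n$ and (because $w\in S_{a,n+1}$) not $d_n$, which is impossible. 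If ``partition'' is taken to demand nonempty parts, one adds the trivial remark that each $S_{j,n+1}$ is nonempty, since each $S_{j,n}$ is (by induction) and the last column can always be completed, with $d_n$ available in $B_{1,n}$.

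The step that needs the most care is the disjointness of $S_{n+1,n+1}$ from the sets $S_{a,n+1}$: because $S_{n+1,n+1}$ is itself a union over an auxiliary index $j$, one must first use the inductive hypothesis to pin that $j$ down to $a$ from the prefix alone, and only then does the conflict ``letter $=d_n$'' versus ``letter $\ne d_n$'' at place $(j,n)=(a,n)$ close the argument. Everything else is bookkeeping once the recursive definition of the $S_{j,n+1}$ has been recast in the clean ``prefix plus one distinguished last-column letter'' form described in the second paragraph; indeed that recasting is really the conceptual heart of the proof.
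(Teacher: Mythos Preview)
Your proof is correct and follows essentially the same inductive strategy as the paper: split each word of $S_{n+1}$ into its $S_n$-prefix (lying in a unique $S_{j,n}$ by the inductive hypothesis) and its last $n$ letters, then decide membership in $S_{j,n+1}$ versus $S_{n+1,n+1}$ according to whether the letter at place $(j,n)$ is $\D_n$ or $d_n$. The paper's version is terser---it handles covering and disjointness in a single sentence---whereas you spell out the disjointness case analysis explicitly, but the underlying argument is the same.
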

\begin{proof} The proof is by induction on $n\ge1$. For $n=1$ we
have
$$S_{2}=\{\D_1,d_1\}, \quad S_{1,2}=\{\D_1\}, \quad S_{2,2}=\{d_1\}.$$
Thus the statement is true for $n=1$. Now assume that
$S_n=\bigcup_{j=1}^{n}S_{j,n}$ is a partition of $S_n$ for $n\ge1$.
If $w\in S_{n+1}$, then $w=w_{j,n}x\cdots x$ where $w_{j,n}\in
S_{j,n}$ for a unique $j$. The $x$ in the $j$-place of the last $n$
letters is either $d_n$ or of the form $\D_n$. In the first case
$w\in S_{n+1,n+1}$ and in the second case $w\in S_{j,n+1}$. Thus
$S_{n+1}=\bigcup_{j=1}^{n+1}S_{j,n+1}$. At the same time we saw that
$w\in S_{j,n+1}$ for a unique $1\le j\le n+1$. This completes the
proof. \qed
\end{proof}

\

The construction above is now made explicit for small values of $n$.

\noindent 1) $n=2$.
$$S_{1,3}=\{\D_1x\D_2\}, \quad
S_{2,3}=\{d_1\D_2x\}, \quad S_{3,3}=\{\D_1xd_2\}\cup\{d_1d_2x\},$$
$$ |S_{1,3}|=6, \quad |S_{2,3}|=4,\quad |S_{3,3}|=6, \quad |S_3|=16.$$

\

\noindent 2) $n=3$.
$$S_{1,4}=\{\D_1x\D_2xx\D_3\}, \quad S_{2,4}=\{d_1\D_2xx\D_3x\},$$
$$S_{3,4}=\{\D_1xd_2\D_3xx\}\cup\{d_1d_2x\D_3xx\},$$
\begin{equation*}
\begin{split}
S_{4,4}=&\{\D_1x\D_2xxd_3\}\cup\{d_1\D_2xxd_3x\}\\
&\cup\{\D_1xd_2d_3xx\}\cup\{d_1d_2xd_3xx\},
\end{split}
\end{equation*}
$$ |S_{1,4}|=240, \quad |S_{2,4}|=144,\quad
|S_{3,4}|=144, \quad |S_{4,4}|=240, \quad|S_4|=768.$$

\

\noindent 3) $n=4$.
$$S_{1,5}=\{\D_1x\D_2xx\D_3xxx\D_4\}, \quad S_{2,5}=\{d_1\D_2xx\D_3xxx\D_4x\},$$
$$S_{3,5}=\{\D_1xd_2\D_3xxx\D_4xx\}\cup\{d_1d_2x\D_3xxx\D_4xx\},$$
\begin{equation*}
\begin{split}
S_{4,5}=&\{\D_1x\D_2xxd_3\D_4xxx\}\cup\{d_1\D_2xxd_3x\D_4xxx\}\\
&\cup\{\D_1xd_2d_3xx\D_4xxx\}\cup\{d_1d_2xd_3xx\D_4xxx\},
\end{split}
\end{equation*}
\begin{equation*}
\begin{split}
S_{5,5}=&\{\D_1x\D_2xx\D_3xxxd_4\}\cup\{d_1\D_2xx\D_3xxxd_4x\}\\
&\cup\{\D_1xd_2\D_3xxxd_4xx\}\cup\{d_1d_2x\D_3xxxd_4xx\}\\
&\cup\{\D_1x\D_2xxd_3d_4xxx\}\cup\{d_1\D_2xxd_3xd_4xxx\}\\
&\cup\{\D_1xd_2d_3xxd_4xxx\}\cup\{d_1d_2xd_3xxd_4xxx\},
\end{split}
\end{equation*}
$$ |S_{1,5}|=80640, \quad |S_{2,5}|=46080,\quad
|S_{3,5}|=41472, $$
$$|S_{4,5}|=46080, \quad |S_{5,5}|=80640,\quad |S_5|=294912.$$

\

\begin{thm}\label{stochastic}
The probability to obtain a word $w\in S_{j,n+1}$ is
$a_j^2(\mm,\kk)$ for all $1\le j\le n+1$.
\end{thm}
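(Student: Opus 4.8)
The plan is to prove this by induction on $n\ge 1$, mirroring exactly the inductive construction of the partition $S_{n+1}=\bigcup_{j=1}^{n+1}S_{j,n+1}$. For $n=1$ the two words $\D_1=c_1$ and $d_1$ are drawn from $B_{1,1}=B_1$, which contains $m_1-k_1+1$ balls of color $c_1$ and $k_1-m_2$ balls of color $d_1$ out of a total of $m_1-m_2+1$; so the probability of $S_{1,2}=\{c_1\}$ is $\tfrac{m_1-k_1+1}{m_1-m_2+1}=a_1^2(\mm,\kk)$ and the probability of $S_{2,2}=\{d_1\}$ is $\tfrac{k_1-m_2}{m_1-m_2+1}=a_2^2(\mm,\kk)$, using formula \eqref{coefficientsab1} with $n=1$. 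This establishes the base case.

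For the inductive step, fix $n\ge 2$ and assume the result holds for $n-1$, i.e. the probability of obtaining any word in $S_{j,n}$ (as a word of length $n(n-1)/2$ using urns $B_{k,j}$, $1\le k\le j\le n-1$) equals $a_j^2(\mm',\kk')$ where $\mm'=(m_1,\dots,m_n)$ and $\kk'=(k_1,\dots,k_{n-1})$ are the truncations. The point is that a word $w\in S_{n+1}$ of length $n(n+1)/2$ is obtained by first drawing the first $n(n-1)/2$ letters (from the urns $B_{k,j}$ with $j\le n-1$) and then drawing the last $n$ letters from the urns $B_{n,n}, B_{n-1,n}, \dots, B_{1,n}$. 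The last $n$ draws are probabilistically independent of the first $n(n-1)/2$ draws (each draw is with replacement into its own urn, so the urn contents never change), and their probabilities depend only on the fixed contents of $B_{n,n},\dots,B_{1,n}$. So for $1\le j\le n$, by the definition $S_{j,n+1}=\{w_{j,n}\,x\cdots x\,\D_n\,x\cdots x\}$ with $\D_n$ in the $j$-th of the last $n$ slots, we get
\begin{equation*}
\mathbb{P}(S_{j,n+1})=\mathbb{P}(S_{j,n})\cdot\Big(\prod_{\substack{1\le k\le n\\ k\ne j}}1\Big)\cdot \frac{(\text{\# balls in }B_{j,n}\text{ not of color }d_n)}{(\text{\# balls in }B_{j,n})},
\end{equation*}
where the product of $1$'s records that the slots $k\ne j$ among the last $n$ allow any color ($x$). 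The urn $B_{j,n}=B_j\cup\cdots\cup B_n$ contains $\sum_{i=j}^{n}(m_i-m_{i+1}+1)=m_j-m_{n+1}+n-j+1$ balls, of which $k_n-m_{n+1}$ have color $d_n$ (only $B_n$ contributes color $d_n$), so the last factor is $\tfrac{m_j-m_{n+1}+n-j+1-(k_n-m_{n+1})}{m_j-m_{n+1}+n-j+1}=\tfrac{m_j-k_n+n-j+1}{m_j-m_{n+1}+n-j+1}$. By the inductive hypothesis $\mathbb{P}(S_{j,n})=a_j^2(\mm',\kk')$, and the required identity $a_j^2(\mm',\kk')\cdot\tfrac{m_j-k_n+n-j+1}{m_j-m_{n+1}+n-j+1}=a_j^2(\mm,\kk)$ is a direct comparison of the two product formulas from \eqref{coefficientsab1}: passing from $(\mm',\kk')$ to $(\mm,\kk)$ multiplies the numerator of $a_j^2$ by the new factor $|k_n-m_j-n+j-1|=m_j-k_n+n-j+1$ (since $k_n\le m_j$ forces this to be $\ge 1$ here — indeed $m_j\ge k_j\ge k_n$ and $j\le n$) and the denominator by the new factor $|m_{n+1}-m_j-n+j|=m_j-m_{n+1}+n-j$... wait, one must be careful: the new denominator factor is $|m_{n+1}-m_j-(n+1)+j|=m_j-m_{n+1}+n+1-j$, which is exactly the denominator above. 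So the identity holds, handling the cases $1\le j\le n$.

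For the last piece $S_{n+1,n+1}$, the cleanest route is complementary counting: since $\{S_{j,n+1}\}_{j=1}^{n+1}$ is a partition of the sample space (Proposition, already proved above), $\mathbb{P}(S_{n+1,n+1})=1-\sum_{j=1}^{n}\mathbb{P}(S_{j,n+1})=1-\sum_{j=1}^{n}a_j^2(\mm,\kk)$, and by \eqref{sumauno} (applied to the $(n+1)$-tuple $\mm$ and $\kk$, noting $\sum_{i=1}^{n+1}a_i^2(\mm,\kk)=1$) this equals $a_{n+1}^2(\mm,\kk)$, as desired. Alternatively one can verify it directly by summing over the explicit description of $S_{n+1,n+1}$ as a union of $n$ pieces, using the inductive hypothesis on each $S_{j,n}$ together with the factor $\tfrac{k_n-m_{n+1}}{m_j-m_{n+1}+n-j+1}$ for drawing color $d_n$ from $B_{j,n}$; this reproduces exactly the telescoping computation displayed for $n=2$ in Section \ref{urnmodelU3}, but the partition argument is shorter and avoids that algebra. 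The main obstacle is the bookkeeping in the inductive step: one must be sure that the first $n(n-1)/2$ draws genuinely realize the length-$n(n-1)/2$ experiment for $\U(n)$ with urns $B_{k,j}$, $j\le n-1$ — which is immediate since those urns have the same contents whether one is running the $n$-experiment or the $(n-1)$-experiment — and one must track the signs/absolute values in \eqref{coefficientsab1} so that the new numerator and denominator factors come out positive in the relevant range; I would state the inequalities $m_j\ge k_j\ge\cdots\ge k_n\ge m_{n+1}$ explicitly to justify dropping the absolute value bars.
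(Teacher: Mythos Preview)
Your proof is correct and follows essentially the same inductive approach as the paper: establish the base case $n=1$ directly, for $1\le j\le n$ combine the inductive hypothesis with the count of non-$d_n$ balls in $B_{j,n}$ to obtain the recursive identity $a_j^2(\mm,\kk)=a_j^2(\mm',\kk')\cdot\tfrac{m_j-k_n+n-j+1}{m_j-m_{n+1}+n-j+1}$, and then handle $j=n+1$ by complementation using the partition of $S_{n+1}$ together with \eqref{sumauno}. The only cosmetic difference is that the paper states that recursive identity for $a_j^2$ at the outset rather than deriving it mid-proof (and your mid-proof ``wait'' self-correction on the new denominator index $i=n+1$ should of course be cleaned up in a final version).
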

\begin{proof}
Given $(\mm,\kk)$ let $\mm'=(m_1,\dots,m_n)$ and $\kk'=(k_1,\dots,k_{n-1})$. Then from \eqref{coefficientsab1} we get
$$a_j^2(\mm,\kk)=a_j^2(\mm',\kk')\frac{m_j-k_n+n-j+1}{m_j-m_{n+1}+n-j+1},$$
for all $1\le j\le n$. This result allows us to prove the theorem by
induction on $n\ge 1$. When $n=1$ we have only one urn $B_1$ with
$m_1-k_1+1$ balls of color $c_1$ and $k_1-m_2$ balls of color $d_1$.
Thus the probability to obtain a word in $S_{1,2}$ is
$$\frac{m_1-k_1+1}{m_1-m_{2}+1}= a_1^2(\mm,\kk),$$
where $\mm=(m_1,m_2)$ and $\kk=(k_1)$. Similarly the probability to obtain a word in $S_{2,2}$ is
$$\frac{k_1-m_2}{m_1-m_{2}+1}= a_2^2(\mm,\kk).$$
Thus the theorem holds for $n=1$. Now assume that the theorem is true for $n\ge1$. If $1\le j\le n$ we have
$$S_{j,n+1}=\{w_{j,n+1}=w_{j,n}x\cdots x\D_nx\cdots x\in S_{n+1}:  w_{j,n}\in S_{j,n}\}$$
where the number of $x$'s to the right of $\D_n$ is $j-1$. Thus the
probability to obtain a word $w\in S_{j,n+1}$ is equal to
$a_j^2(\mm',\kk')$ times the probability to obtain the symbol $\D_n$
from the urn $B_{j,n}$. Now we recall the composition of urn
$B_{j,n}$. By definition
$$B_{j,n}=B_j\cup B_{j+1}\cup\cdots\cup B_n,$$
the total number of balls $|B_{j,n}|=m_j-m_{n+1}+n-j+1$ and the number of balls of color $d_n$ is $k_n-m_{n+1}$. Therefore the probability to obtain the symbol $\D_n$ from urn $B_{j,n}$ is
$$\frac{m_j-k_n+n-j+1}{m_j-m_{n+1} +n-j+1}.$$

Hence  the probability to obtain a word $w\in S_{j,n+1}$ is
$$a_j^2(\mm',\kk')\frac{m_j-k_n+n-j+1}{m_j-m_{n+1} +n-j+1}=a_j^2(\mm,\kk),$$
which establishes the theorem for all $1\le j\le n$. Since
$\sum_{1\le j\le n+1} a_j^2(\mm,\kk)=1$ (see \eqref{sumauno}) and
$S_{n+1}=\bigcup_{1\le j\le n+1}S_{j,n+1}$ is a partition of
$S_{n+1}$ it follows that the statement of the theorem is also true
for $j=n+1$. \qed
\end{proof}

\

Since we return the chosen ball at the end of each individual experiment to its original urn, we have that the state of the system has not yet changed. This is about to happen now.

The outcome of a complete experiment produces a word that belongs to one of the subsets
$S_{j,n+1}$ in the partition of the sample space $S_{n+1}$. Depending on which subset turns up we take a different action, thus obtaining a random walk in the space of configurations $\mm=(m_1,\dots,m_{n+1})$ which satisfy the constraints $m_1\ge k_1\ge\cdots\ge m_n\ge k_n\ge m_{n+1}$  imposed by the fixed $n$-tuple $\kk=(k_1,\dots,k_n)$. This simple process will give for each configuration $\mm$ a total of  at most $n+1$ possible nearest neighbours  to which we can jump in one transition.

A result belonging to the subset $S_{j,n+1}$, $j=1,\dots,n+1$, will lead to a transition to a new state $\mm+\ee_j$, where $m_j$ is increased by one. In terms of balls this will be achieved by removing from each urn containing a ball of color $d_{j-1}$ one of these balls, and adding to each urn containing a ball of color $c_j$ one ball of this color from the bath.

Notice that all these transitions keep the values of $k_1,\dots,k_n$
unchanged and any transition that would violate the constrains does
not occur because the corresponding probability $a^2_j(\mm,\kk)$ vanishes.

\section{A Young diagram model for $\U(3)$}\label{Young}

To each  configuration $m_1\ge k_1\ge m_2\ge \cdots\ge m_n\ge k_n\ge m_{n+1}\ge0$ we associate  its Young diagram which has $m_1$ boxes in the first row, $k_1$ boxes in the second  row, and so on down to the last row which has $m_{n+1}$ boxes.

\begin{figure}[h]
  \centering
$$D=\begin{array} {l}
\begin{array}{|c|c|c|c|c|c|c|c|}
\hline \makebox[1.5mm]{}&\makebox[1.5mm]{} &\makebox[1.5mm]{}&\makebox[1.5mm]{}&\makebox[1.5mm]{}&\makebox[1.5mm]{}&\makebox[1.5mm]{}&\makebox[1.5mm]{} \\ \hline \end{array}\\
\begin{array}{|c|c|c|c|c|c|} \makebox[1.5mm]{}& \makebox[1.5mm]{} & \makebox[1.5mm]{}&\makebox[1.5mm]{}&\makebox[1.5mm]{}&\makebox[1.5mm]{}\\ \hline \end{array}\\
\begin{array}{|c|c|c|c|c|} \makebox[1.5mm]{}&\makebox[1.5mm]{}&\makebox[1.5mm]{}&\makebox[1.5mm]{}&\makebox[1.5mm]{}\\ \hline \end{array}\\
\begin{array}{|c|c|c|} \makebox[1.5mm]{}&\makebox[1.5mm]{}&\makebox[1.5mm]{}\\ \hline \end{array}\\
\begin{array}{|c|} \makebox[1.5mm]{}\\ \hline \end{array}\\
\end{array}$$
\caption{$\mm=(8,5,1),\;\kk=(6,3)$.}
\end{figure}

\

We will construct a stochastic process whereby in one unit of time
one of the $m_i$ is increased by one with probability
$a_i^2(\mm,\kk)$ see  \eqref{coefficientsab}. As in Section
\ref{urnmodelUn} this will require running some auxiliary
experiments.

We start with the case $n=1$.
We perform the following experiment to decide if we will increase $m_1$ or $m_2$: we choose to insert a box among one of the $m_1-k_1$ last boxes of the first row or to delete a box from the $k_1-m_2$ last boxes of the second row. An insertion can occur either to the left or to the right of one  of the $m_1-k_1$ last boxes. We observe that there are $m_1-k_1+1$ possibilities of an insertion and $k_1-m_2$ possibilities of a deletion. All these are assigned the same probability.

As an output of the experiment we get either a diagram with $m_1+1$ boxes in the first row, or a diagram with $k_1-1$ boxes in the second row. Here we are implicitly assuming that $k_1>m_2$. If $k_1$ were equal to $m_2$ we would get no Young diagram. Thus the sample space $S$ of our auxiliary experiment consists of two (or one) Young diagrams which are obtained from the original one by adding one box to its first row or deleting one from its second row. Let $S_1$ be the subset of $S$ consisting of the diagram with one more box in the first row, and let $S_2$ be the subset of $S$ consisting of the diagram with one less box in the second row (or the empty set). Then the probability to obtain a diagram in $S_1$ after the experiment is performed is
$$\frac{m_1-k_1+1}{m_1-m_2+1}=a_1(\mm,\kk)^2.$$
Similarly the probability to obtain a diagram in $S_2$  is
$$\frac{k_1-m_2}{m_1-m_2+1}=a_2(\mm,\kk)^2,$$
as we wished. In the first case we go from the state $(\mm,\kk)$ to $(\mm+\ee_1,\kk)$, and in the second case we go from the state $(\mm,\kk)$ to $(\mm+\ee_2,\kk)$.

\

Now let us assume that $n=2$. In this case we will perform three
consecutive auxiliary experiments. The first experiment consists of  inserting
a box among one of the $m_1-k_1$ last boxes of the first row or of
deleting a box from the $k_1-m_2$ last boxes of the second row. The
second experiment consists of  inserting a box among one of the
$m_2-k_2$ last boxes of the third row or of deleting a box from the
$k_2-m_3$ last boxes of the fourth row. Finally the third experiment
consists of  inserting or  deleting a box in one of the first four
rows of the diagram as we did in  the previous experiments; odd rows
go along with insertion and even rows with deletion. If $k_1>m_2$
and $k_2>m_3$ the complete experiment gives rise to a triple
$(D_1,D_2,D_3)$ of Young diagrams: $D_1$ is obtained from the
original one by adding one box to its first row or by deleting one
box from the second row, $D_2$ is obtained from the original one by
adding one box to its third row or by deleting one box from the
fourth row, and $D_3$ is obtained by adding one box to the first or
to the third rows of the original diagram or by deleting one box
from the second or the fourth rows.

In what follows we use the following notation: $D$ denotes the Young diagram
corresponding to the original configuration $(\mm,\kk)$ and  $D'=D\pm\ee_j$
denotes, respectively, the diagram obtained from $D$ by adding or deleting
one box to the $j$-row of $D$, $j=1,2,3,4$. Observe that the sample space
consists of all triples of Young diagrams $(D_1,D_2,D_3)$ with
$D_1=D+\ee_1, D-\ee_2$, $D_2=D+\ee_3, D-\ee_4$, and $D_3=D+\ee_1, D-\ee_2, D+\ee_3, D-\ee_4$.

\begin{figure}[h]
\centering
$$\hskip.5cm D+\ee_1=\begin{array} {l}
\begin{array}{|c|c|c|c|c|c|c|c|c|}
\hline \makebox[1.5mm]{}&\makebox[1.5mm]{} &\makebox[1.5mm]{}&\makebox[1.5mm]{}&\makebox[1.5mm]{}&\makebox[1.5mm]{}&\makebox[1.5mm]{+}&\makebox[1.5mm]{}&\makebox[1.5mm]{}  \\ \hline \end{array}\\
\begin{array}{|c|c|c|c|c|c|} \makebox[1.5mm]{}& \makebox[1.5mm]{} & \makebox[1.5mm]{}&\makebox[1.5mm]{}&\makebox[1.5mm]{}&\makebox[1.5mm]{}\\ \hline \end{array}\\
\begin{array}{|c|c|c|c|c|} \makebox[1.5mm]{}&\makebox[1.5mm]{}&\makebox[1.5mm]{}&\makebox[1.5mm]{}&\makebox[1.5mm]{}\\ \hline \end{array}\\
\begin{array}{|c|c|c|} \makebox[1.5mm]{}&\makebox[1.5mm]{}&\makebox[1.5mm]{}\\ \hline \end{array}\\
\begin{array}{|c|} \makebox[1.5mm]{}\\ \hline \end{array}\\
\end{array}$$
\caption{$\mm=(9,5,1),\;\kk=(6,3).$}
\end{figure}
\begin{figure}[h]
\centering
$$D-\ee_2=\begin{array} {l}
\begin{array}{|c|c|c|c|c|c|c|c|}
\hline \makebox[1.5mm]{}&\makebox[1.5mm]{} &\makebox[1.5mm]{}&\makebox[1.5mm]{}&\makebox[1.5mm]{}&\makebox[1.5mm]{}&\makebox[1.5mm]{}&\makebox[1.5mm]{}  \\ \hline \end{array}\\
\begin{array}{|c|c|c|c|c|} \makebox[1.5mm]{}& \makebox[1.5mm]{} & \makebox[1.5mm]{}&\makebox[1.5mm]{}&\makebox[1.5mm]{}\\ \hline \end{array}\\
\begin{array}{|c|c|c|c|c|} \makebox[1.5mm]{}&\makebox[1.5mm]{}&\makebox[1.5mm]{}&\makebox[1.5mm]{}&\makebox[1.5mm]{}\\ \hline \end{array}\\
\begin{array}{|c|c|c|} \makebox[1.5mm]{}&\makebox[1.5mm]{}&\makebox[1.5mm]{}\\ \hline \end{array}\\
\begin{array}{|c|} \makebox[1.5mm]{}\\ \hline \end{array}\\
\end{array}$$
\caption{$\mm=(8,5,1),\;\kk=(5,3).$}
\end{figure}

Thus our sample space $S_3$ has generically $2\times2\times4=16$
elements. The cardinality of $S_3$ can be smaller, for example if
$k_1=m_2$ and $k_2\ne m_3$, then $|S_3|=6$.

Let us partition the sample space $S_3$ into the following three
classes.
\begin{equation}\label{partitionS3}
\begin{split}
S_{1,3}=&\{(D_1,D_2,D_3):D_1=D+\ee_1; D_2=D+\ee_3,D-\ee_4;\\
&\hskip2.6 cm D_3=D+\ee_1,D+\ee_3,D-\ee_2\},\\
S_{2,3}=&\{(D_1,D_2,D_3):D_1=D-\ee_2; D_2=D+\ee_3;\\
&\hskip2.6cm D_3=D+\ee_1, D+\ee_3, D-\ee_2, D-\ee_4\},\\
S_{3,3}=&\{(D_1,D_2,D_3):D_1=D+\ee_1; D_2=D+\ee_3,D-\ee_4; D_3=D-\ee_4\}\\
&\cup\{(D_1,D_2,D_3):D_1=D-\ee_2; D_2=D-\ee_4; \\
&\hskip3.05cm D_3=D+\ee_1, D-\ee_2,D+\ee_3, D-\ee_4\}.
\end{split}
\end{equation}
We have $|S_{1,3}|=6$, $|S_{2,3}|=4$ and $|S_{3,3}|=2+4=6$. By simple inspection we see that $S_3$ is the disjoint union of $S_{1,3}, S_{2,3}$ and $S_{3,3}$.

Then the probability to obtain a diagram in $S_{1,3}$ after a complete experiment is performed is
$$\frac{(m_1-k_1+1)}{(m_1-m_2+1)}\frac{(m_1-k_2+2)}{(m_1-m_3+2)}=a_1^2(\mm,\kk).$$
Similarly the probability to obtain a diagram in $S_{2,3}$  is
$$\frac{(k_1-m_2)}{(m_1-m_2+1)}\frac{(m_2-k_2+1)}{(m_2-m_3+1)}=a_2^2(\mm,\kk).$$
Finally the probability to obtain a diagram in $S_{3,3}$  is
\begin{equation*}
\begin{split}
& \frac{
(m_1-k_1+1)}{(k_2-m_3)}\frac{(m_1-m_2+1)}{(m_1-m_3+2)}+\frac{(k_1-m_2)}{(k_2-m_3)}
\frac{(m_1-m_2+1)}{(m_2-m_3+1)}\\
&=\frac{(k_2-m_2)(k_1-m_3+1)}{(m_1-m_3+2)(m_2-m_3+1)}=a_3^2(\mm,\kk),
\end{split}
\end{equation*}
as desired.

If $k_1=m_2$ and $k_2\ne m_3$ then $|S_{1,3}|=4$, $S_{2,3}=\emptyset$ and $|S_{3,3}|=2$.  The
probability to obtain a diagram in $S_{1,3}$  is
$$\frac{m_1-k_2+2}{m_1-m_3+2}=a_1^2(\mm,\kk).$$
The probability to obtain a diagram in $S_{2,3}$ is $0=a_2^2(\mm,\kk)$, and the probability to obtain a diagram in $S_{3,3}$  is
$$\frac{k_2-m_3}{m_1-m_3+2}=a_3^2(\mm,\kk),$$
as expected.

Now the state of our random walk is modified in one unit of time as follows: if the outcome of the complete experiment above belongs to $S_{j,3}$, then we go from $(\mm,\kk)$ to $(\mm+\ee_j,\kk)$, $j=1,2,3$. In terms of diagrams we move from $D$ to $D+\ee_{2j-1}$, $j=1,2,3$.

\section{A Young diagram model for every $\U(n+1)$}\label{diagrammodelUn}

Given a  Young diagram $D$  corresponding to the original configuration $(\mm,\kk)$,  $D'=D\pm\ee_j$ denotes, respectively, the diagram obtained from $D$ by adding or deleting one box to the $j$-row of $D$, $j=1,\dots,2n+1$.  The stochastic process we are going to construct  will have a transition mechanism determined by first performing a sequence of auxiliary experiments $E_{k,j}$ to be described now. We start by considering the following set of consecutive  pairs of rows of the diagram $D$,
$$\{(1,2),(3,4),\dots,(2n-1,2n)\}.$$

The experiment $E_{k,j}$, $1\le k\le j\le n$, consists of inserting at random a box in an odd row $i$ among  the last $m_i-k_i$ last boxes of such a row, or deleting at random a box in an even row $i$ from   the last $k_i-m_{i+1}$ last boxes of such a row. The row $i$ is also chosen at random in the set of consecutive rows
$$\{2k-1,2k,\dots,2j\}.$$

The sequence of experiments is obtained by ordering them by the lexicographic order $E_{k,j}<E_{r,s}$ if $j<s$ or $j=s$ and $r<k$. Thus our sequence is the following one
$$E_{1,1},E_{2,2},E_{1,2},E_{3,3},E_{2,3},E_{1,3},\dots, E_{n,n},E_{n-1,n},\dots,E_{1,n}.$$

The symbol $D\pm\E_i$ in the place corresponding to the experiment $E_{k,j}$ of an $n(n+1)/2$-tuple of diagrams, will stand for any possible outcome of $E_{k,j}$ except the diagram $D\pm\ee_i$, respectively. While an $X$ in such a place stands for any outcome of $E_{k,j}$. For example in the case $n=2$ considered before, see \eqref{partitionS3}, we can write
\begin{equation*}
\begin{split}
S_{1,3}=&\{(D-\E_2,X,D-\E_4)\},\\
S_{2,3}=&\{(D-\ee_2,D-\E_4,X)\},\\
S_{3,3}=&\{(D-\E_2,X,D-\ee_4\})\cup\{(D-\ee_2,D-\ee_4,X)\}.
\end{split}
\end{equation*}

Now we have a convenient notation to define inductively, for
$n\ge2$,  a  growth process  similar to the one introduced in
Section \ref{urnmodelUn}, to break up the outcomes of the sample
space $S_{n+1}$ into sets $S_{j,n+1}$ $(j=1,\dots,n+1)$ starting
from the partition of $S_n$ into sets $S_{j,n}$ $(j=1,\dots,n)$. Let
$D_{j,n}$ denote any $n$-tuple in the set $S_{j,n}$, then we set
$$S_{1,n+1}=\{D_{1,n+1}=(D_{1,n},X,\cdots, X,D-\E_{2n})\in S_{n+1}:  D_{1,n}\in S_{1,n}\}.$$
 Observe that the number of diagrams in the $(n+1)(n+2)/2$-tuple  $D_{1,n+1}$ to the right of the
 $n(n+1)/2$-tuple $D_{1,n}$ is  $n$.
 Similarly we define
 $$S_{2,n+1}=\{D_{2,n+1}=(D_{2,n},X,\cdots, X,D-\E_{2n},X)\in S_{n+1}:  D_{2,n}\in S_{2,n}\}.$$
More generally for $1\le j\le n$ we let
\begin{equation*}
\begin{split}
S_{j,n+1}&\\
=&\{D_{j,n+1}=(D_{j,n},X,\cdots, X,D-\E_{2n},X,\cdots, X)\in S_{n+1}:  D_{j,n}\in S_{j,n}\}
\end{split}
\end{equation*}
where the number of $X$'s to the right of $D-\E_{2n}$ is $j-1$.

The definition of $S_{n+1,n+1}$ is (as in Section \ref{urnmodelUn})
more interesting, namely
\begin{equation*}
\begin{split}
S&_{n+1,n+1}=\{D_{n+1,n+1}=(D_{1,n},X,\cdots, X,D-\ee_{2n})\in S_{n+1}:  D_{1,n}\in S_{1,n}\}\\
&\quad\cup\{D_{n+1,n+1}=(D_{2,n},X,\cdots, X,D-\ee_{2n},X)\in S_{n+1}:  D_{2,n}\in S_{2,n}\}\\
&\quad\cup\cdots\cup\{D_{n+1,n+1}=(D_{n,n},D-\ee_{2n},X,\cdots, X)\in S_{n+1}:  D_{n,n}\in S_{n,n}\}.
\end{split}
\end{equation*}

Now by induction on $n\ge2$ it is easy to prove that
$\{S_{j,n+1}:1\le j\le n+1\}$ is a partition of $S_{n+1}$. Also by
induction on $n\ge2$ it is possible, as we did to established
Theorem \ref{stochastic}, to prove the following main result.
\begin{thm}
The probability to obtain an $n(n+1)/2$-tuple of diagrams
$D_{j,n+1}\in S_{j,n+1}$  is $a_j^2(\mm,\kk)$ (see
\eqref{coefficientsab1}) for all $1\le j\le n+1$.
\end{thm}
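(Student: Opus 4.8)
The plan is to imitate the proof of Theorem \ref{stochastic}, since the Young diagram model and the urn model are formally the same construction under an evident dictionary: the experiment $E_{k,j}$ plays the role of the urn $B_{k,j}$, inserting a box in an odd row $2\ell-1$ (resp.\ deleting one from the even row $2\ell$) corresponds to drawing a ball of colour $c_\ell$ (resp.\ $d_\ell$), and the symbols $D-\E_{2n}$ and $D-\ee_{2n}$ correspond to $\D_n$ and $d_n$. Accordingly I would argue by induction on $n$, the cases $n=1$ and $n=2$ having been treated explicitly in Section \ref{Young}; recall in particular that for $n=1$ one reads off $a_1^2(\mm,\kk)$ and $a_2^2(\mm,\kk)$ directly from the single experiment, and that the identity $a_j^2(\mm,\kk)=a_j^2(\mm',\kk')\frac{m_j-k_n+n-j+1}{m_j-m_{n+1}+n-j+1}$, with $\mm'=(m_1,\dots,m_n)$ and $\kk'=(k_1,\dots,k_{n-1})$, is an immediate consequence of \eqref{coefficientsab1}, exactly as in the proof of Theorem \ref{stochastic}.

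For the inductive step I would first note that the experiments $E_{k,i}$ with $i\le n-1$ only modify the rows $1,\dots,2n-2$ of the diagram $D$ of $(\mm,\kk)$, and that these rows together with the (untouched) row $2n-1$ form exactly the Young diagram of $(\mm',\kk')$; hence the first $(n-1)n/2$ auxiliary experiments run on $D$ realise the process of the previous case attached to $(\mm',\kk')$, and by the induction hypothesis the probability that the initial segment of an outcome lies in $S_{j,n}$ equals $a_j^2(\mm',\kk')$ for $1\le j\le n$. Next, fixing $1\le j\le n$, the definition of $S_{j,n+1}$ says that an outcome lies in it precisely when its initial segment is in $S_{j,n}$, the experiment $E_{j,n}$ returns something other than $D-\ee_{2n}$, and the $j-1$ experiments following $E_{j,n}$ in the order $E_{n,n},E_{n-1,n},\dots,E_{1,n}$ are unconstrained (so contribute a factor $1$). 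Since $E_{j,n}$ picks uniformly among the moves available in rows $2j-1,\dots,2n$, of which there are $\sum_{\ell=j}^{n}(m_\ell-k_\ell+1)+\sum_{\ell=j}^{n}(k_\ell-m_{\ell+1})=m_j-m_{n+1}+n-j+1$ in all and exactly $k_n-m_{n+1}$ that delete a box from row $2n$, the probability that $E_{j,n}$ avoids $D-\ee_{2n}$ is $\frac{m_j-k_n+n-j+1}{m_j-m_{n+1}+n-j+1}$; multiplying by $a_j^2(\mm',\kk')$ and invoking the algebraic identity above yields $a_j^2(\mm,\kk)$ for $1\le j\le n$. Because $\{S_{j,n+1}\}_{j=1}^{n+1}$ partitions $S_{n+1}$, the case $j=n+1$ follows by subtraction from $\sum_{j=1}^{n+1}a_j^2(\mm,\kk)=1$, see \eqref{sumauno}.

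I expect the only genuine obstacle to be the combinatorial bookkeeping: verifying that in the $n(n+1)/2$-tuple defining $S_{j,n+1}$ the constrained slot is precisely the one indexed by $E_{j,n}$ (the $n$ new experiments appear in the order $E_{n,n},\dots,E_{1,n}$, and exactly $j-1$ unconstrained slots trail the constrained one), getting the count of moves in $E_{j,n}$ right, and checking that degenerate configurations---in which some row has no movable box---need no separate treatment, since there both the relevant count and the relevant $a_j^2(\mm,\kk)$ vanish.
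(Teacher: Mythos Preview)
Your proposal is correct and matches the paper's approach exactly: the paper does not give a separate proof but simply states that the result follows by induction on $n\ge2$ ``as we did to establish Theorem \ref{stochastic},'' which is precisely the argument you have spelled out in full, including the recursive identity for $a_j^2(\mm,\kk)$, the count of moves in $E_{j,n}$, and the subtraction argument for $j=n+1$ using \eqref{sumauno}.
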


The outcome of a complete experiment produces an $n(n+1)/2$-tuple of
Young diagrams  that belongs to one of the partition subsets
$S_{j,n+1}$ of the sample space $S_{n+1}$. Depending on which subset
turns up we take a different action, thus obtaining a random walk in
the space of configurations $\mm=(m_1,\dots,m_{n+1})$ which satisfy
the constraints
$$m_1\ge k_1\ge\cdots\ge m_n\ge k_n\ge m_{n+1}\ge0,$$
imposed by the fixed $n$-tuple $\kk=(k_1,\dots,k_n)$. This simple process will give for each configuration $\mm$ a total of at most $n+1$ possible nearest neighbours  to which we can jump in one transition.

A result belonging to the subset $S_{j,n+1}$, $j=1,\dots,n+1$, will lead to a transition to a new state $\mm+\ee_j$, where $m_j$ is increased by one.

Notice that all these transitions keep the values of $k_1,\dots,k_n$
unchanged and any transition that would violate the constrains does
not occur because the corresponding probability $a^2_j(\mm,\kk)$ vanishes.

\newcommand\bibit[5]{\bibitem [#1]
{#2}#3, {\em #4,\!\! } #5}

\end{document}